\newtheorem{theorem}{Theorem}
\newtheorem{proposition}[theorem]{Proposition}
\newtheorem{corollary}[theorem]{Corollary}
\newtheorem{lemma}[theorem]{Lemma}
\theoremstyle{definition}
\newtheorem{example}[theorem]{Example}
\theoremstyle{remark}
\newtheorem{remark}[theorem]{Remark}
\def\pd#1{\partial_{#1}}
\def\rank#1{{\rm rank}\, #1}
\def\Prob{\mathbf P}
\def\ch#1{{\rm Char}(#1)}
\def\ord#1{{\rm ord}(#1)}
\def\aff{{\rm aff} }
\def\comment#1{#1}
\title{
Holonomic modules associated with multivariate normal probabilities of polyhedra
}
\author{Tamio Koyama}
\date{}
\begin{document}
\maketitle
\begin{abstract}
The probability content of a convex polyhedron 
with a multivariate normal distribution
can be regarded as a real analytic function.
We give a system of linear partial differential equations 
with polynomial coefficients
for the function and show that the system 
induces a holonomic module.
The rank of the holonomic module is equal to
the number of nonempty faces of the convex polyhedron, 
and we provide an explicit Pfaffian equation (an integrable connection) that is 
associated with the holonomic module.
These are generalizations of results for the Schl\"afli function that were
given by Aomoto.
\\\noindent
{\it Keywords}\/.
convex polyhedron, 
inclusion--exclusion identity, 
holonomic modules, 
holonomic rank, 
Pfaffian equation 
\\\noindent
MSC classes: 16S32, 62H10
\end{abstract}
\section{Introduction}
A convex polyhedron $P$ is the intersection of half-spaces of the 
$d$-dimensional Euclidean space $\mathbf R^d$.
We are interested in numerical calculation of the probability ${\bf P}(X\in P)$,
where $X$ is a random vector distributed as a $d$-dimensional 
normal distribution with mean $\mu$ and covariance matrix $\Sigma$.
When $P$ is the orthant 
$\{x\in \mathbf R^d:\, x_i\geq 0, \, 1\leq i\leq d\}$ in $\mathbf R^d$, 
the probability is called the orthant probability
and can be regarded as a function of $\mu$ and $\Sigma$.
By the inclusion--exclusion identity given in \cite{edelsbrunner} and \cite{naiman-wynn}, 
the probability content of the convex polyhedron can be written as 
a linear combination of the orthant probabilities.
For this reason, the literature includes many discussions of methods 
for evaluating the orthant probabilities.
For example, 
\cite{mkh} proposed a method based on recursive integration, 
and \cite{genz1992} and \cite{mvtnorm} proposed 
the use of the randomized quasi-Monte Carlo procedure.
For details, see \cite{genz-bretz2009}.
In \cite{kuriki-miwa-hayter}, the probability content of a general convex polyhedron
was evaluated by calculating the orthant probabilities.

The motivation of our study is to evaluate the probability content of a convex
polyhedron by a completely different and novel approach that uses 
the holonomic gradient method (HGM) proposed in \cite{n3ost2}.
The HGM, which is based on the theory and algorithms of $D$-modules, 
is a method for numerically calculating definite integrals.
It can be applied to a broad class of problems.
In fact, various applications of HGM have been proposed, for example,  
\cite{hnt2}, \cite{so3}, and \cite{A-prob}.
In order to apply HGM, we need to regard the probability $\Prob(X\in P)$ 
as a function and provide an explicit Pfaffian equation for it.
In the case where $P$ is the orthant and $\mu=0$, the orthant probability
is a function of $\Sigma$ and Schl\"afli gave a recurrence formula for 
it in \cite{schlafli}.
In \cite{plackett}, Plackett generalized Schl\"afli's result
for the case where $P$ is the orthant and $\mu\neq 0$. 
In \cite{koyama-takemura1}, we provided a holonomic system and a Pfaffian equation
that is associated with the orthant probability. Our Pfaffian equation corresponds with the reduction formula 
in \cite{plackett} and \cite{gassmann}.
In this paper, we generalize our previous results \cite{koyama-takemura1} and 
give a recurrence formula as a Pfaffian equation 
for the case of a general convex polyhedron.

Let $\Sigma = AA^\top$ be a Cholesky decomposition of the covariance matrix $\Sigma$, 
and let $Y$ be a random vector that is $d$-variate normally distributed with mean vector $0$ and for which the covariance matrix is the identity matrix.
Then, we have ${\bf P}(X\in P)={\bf P}(AY+\mu\in P)$, and 
the set $\{y\in \mathbf R^d : Ay+\mu \in P\}$ is also a convex polyhedron.
Hence, it is enough to consider the case in which the mean vector of $X$ is $0$ and the covariance matrix of $X$ is 
the identity matrix.
Under this assumption, 
the probability ${\bf P}(X\in P)$ can be written as 
\[
{\bf P}(X\in P)=
\frac{1}{(2\pi)^{d/2}}
\int_{x\in P} \exp(-\frac{1}{2}\sum_{i=1}^dx_i^2)dx_1\cdots dx_d.
\]
The polyhedron $P$ can be written as 
\begin{equation}\label{calP}
P=\{x\in \mathbf R^d :\, \sum_{i=1}^d\tilde a_{ij}x_i+\tilde b_j\geq 0, \, 
1\leq j\leq n\},
\end{equation}
where $\tilde a_{ij}$ and $\tilde b_j$ are real numbers.
We wish to study this integral with the HGM, 
and as a first step, 
we will assume that the convex polyhedron is in the ``general position;''
the precise definition of ``general position'' 
will be given in Section \ref{sec3}.

Let $a$ be a $d\times n$ matrix, and let $b$ be a vector with length $n$.
We are interested in the analytic properties of the function 
\begin{equation}\label{p0}
\varphi(a, b)=
\int_{\mathbf R^d} 
\exp(-\frac{1}{2}\sum_{i=1}^dx_i^2)
\prod_{j=1}^n H\left(\sum_{i=1}^d a_{ij}x_i+b_j\right)
dx_1\cdots dx_d,
\end{equation}
which is defined on a neighborhood of 
$\tilde a =(\tilde a_{ij}), \tilde b =(\tilde b_j)$.
Here, we denote by $H$ the Heaviside function.
Note that this function is 
an interesting specialization
of the one studied by Aomoto in \cite{aomoto1982a} and 
Aomoto, Kita, Orlik, and  Terao in \cite{aomoto-kita-orlik-terao}.
For the meaning of the specialization, see Remark \ref{rem:1}, below.
In this paper, we provide a holonomic system and a Pfaffian equation associated with 
this function. The Pfaffian equation is required by the HGM for $\varphi(a, b)$.
In order to explicitly provide the holonomic system, we decompose the function 
by the inclusion--exclusion identity associated with the polyhedron $P$.
We also show that the holonomic rank of the system is equal to the number of 
nonempty faces of the polyhedron $P$.
This Pfaffian equation is a generalization of the recursion formula 
given by Plackett \cite{plackett}.
In addition, the singular locus of the Pfaffian equation 
is compatible with that of the Schl\"afli function given in \cite{aomoto1977}.

This paper is constructed as follows.
In Section \ref{sec2}, we 
will give a brief explanation of 
holonomic modules and Pfaffian equations.
In  Section \ref{sec3}, we provide an analytic continuation of 
the function $\varphi(a, b)$.
In section \ref{sec:6}, we prove an existence of an open neiborhood of a 
given point in general position.
In Section \ref{sec4}, we give 
a system of linear partial differential equations with polynomial coefficients
for the function $\varphi(a, b)$
and show that the system induces a holonomic module.
In Section \ref{sec5}, we show that the holonomic rank of the module is equal to the number of nonempty faces of $P$
and explicitly provide the Pfaffian equation associated with the module.

\section{Holonomic module and Pfaffian equation}
\label{sec2}
Before starting the main discussion, we briefly review 
holonomic modules and Pfaffian equations.
For a comprehensive presentation, see \cite{n3ost2} 
and the references cited therein.
We denote 
by $D_x=\mathbf C\langle x_i, \pd{x_i}: 1\leq i\leq n\rangle $
the ring of differential operators of $n$ variables $x_1, \cdots, x_n$ 
with polynomial coefficients.
Here, we put $\pd{x_i}=\partial /\partial x_i$. 
Let us consider a system of linear partial differential equations 
\begin{equation}\label{diff-eq}
\sum_{j=1}^m P_{kj}g_j = 0 \quad (P_{kj}\in D_x, \, 1\leq k\leq m')
\end{equation}
for unknown functions $g_1, \dots, g_m$. 
Let $(D_x)^m$ be the free $D_x$-module with the basis $\{g_1, \dots, g_m\}$, and let 
$N$ be a $D_x$-submodule of $(D_x)^m$ generated by 
$P_k=\sum_{j=1}^mP_{kj}g_j\in (D_x)^m\, (1\leq k\leq m')$.
Note that the basis $\{g_1, \dots, g_m\}$ is an arbitrary set and $g_i$ is not a function. 
We denote by $M$ the quotient module $(D_x)^m/N$.

The set consisting of the holomorphic functions on a domain 
$U\subset \mathbf C^n$ forms a left $D_x$-module $\mathcal O(U)$.
For a morphism $\varphi:M\rightarrow {\cal O}(U)$ of left $D_x$-modules,
the functions $\varphi(g_1), \dots, \varphi(g_m)$ satisfy 
system \eqref{diff-eq}.
For this reason, we call a vector-valued function $(g'_1, \dots, g'_m)$ on $U$ 
a solution of $M$ 
when there is a morphism of $D_x$-modules $\varphi:M\rightarrow{\cal O}(U)$ 
such that $\varphi(g_j)=g'_j\, (1\leq j\leq m)$.

\if0
In order to define the notion of the holonomic modules, 
we need the sheaf $\mathcal D_x$ of rings of linear partial differential 
operators with analytic coefficients.
Let $\mathcal M:= \mathcal D_x \otimes_{D_x} M$ and 
$\mathcal N:= \mathcal D_x \otimes_{D_x} N$.
For a differential operator $P=\sum c_\alpha(x) \pd{x}^\alpha\in\mathcal D_x$, 
we define the order $\ord{P}$ and the principal symbol $\sigma(P)$ of $P$ as 
\begin{align*}
\ord{P} &= \max \left\{|\alpha|: c_\alpha(x)\neq 0 \right\}, \\
\sigma(P) &= \sum_{|\alpha|=\ord{P}} c_\alpha(x)\xi^\alpha \in \mathcal O[\xi]
\end{align*}
where $\mathcal O$ denotes the sheaf of holomorphic functions of $x$.
Similarly, for a element $P=\sum_{j=1}^mP_jg_j\, (P_j\in \mathcal D_x)$ 
of $(\mathcal D_x)^m$, 
we also define the order $\ord{P}$ and the principal symbol $\sigma(P)$ of $P$ 
as 
\begin{align*}
\ord{P} &= \max \left\{\ord{P_j} : 1\leq j \leq m \right\}\\
\sigma(P) &= \sum_{j\in J(P)} \sigma(P_j)g_j \in(\mathcal O[\xi])^m.
\end{align*}
Here, $J(P) = \{j\in {\bf N}:\, 1\leq j\leq m, \, \ord{P_j}=\ord{P}\}$
and $\{g_1, \dots, g_m\}$ denotes a basis of $(\mathcal D_x)^m$ and 
that of $(\mathcal O[\xi])^m$.
The $\mathcal O[\xi]$-module $\overline{\mathcal N}$ generated by 	
$\{\sigma(P): P \in \mathcal N\}$
is called the {\it graded module} of $\mathcal N$.
The support of the quotient module 
$\overline{\mathcal M}=(\mathcal O[\xi])^m/\overline{\mathcal N}$ is called
the {\it characteristic variety} of $\mathcal M$, 
and denoted by $\ch{\mathcal M}$.
\fi
\if0
We denote  by $\mathbf C[x,\xi]$ 
a polynomial ring of $2n$ variables $x_1,\dots,x_n,\xi_1,\dots,\xi_n$.
For a differential operator $P=\sum c_{\alpha\beta} x^\alpha\pd{x}^\beta$ in $D_x$, 
we define the order $\ord{P}$ and the {\it principal symbol} $\sigma(P)$ 
of $P$ as 
\begin{align*}
\ord{P} &= \max \{|\beta|: c_{\alpha\beta}\neq 0 \},\\
\sigma(P) &= \sum_{|\beta|=\ord{P}} c_{\alpha\beta}x^\alpha\xi^\beta \in \mathbf C[x,\xi].
\end{align*}
Similarly, for a element $P=\sum_{j=1}^mP_jg_j\,(P_j\in D_x)$ of $(D_x)^m$,
we also define the order $\ord{P}$ and the principal symbol $\sigma(P)$ of $P$ 
as 
\begin{align*}
\ord{P} &= \max \{\ord{P_j} : 1\leq j \leq m \}\\
\sigma(P) &= \sum_{j\in J(P)} \sigma(P_j)g_j \in(\mathbf C[x,\xi])^m.
\end{align*}
Here, $(\mathbf C[x,\xi])^m$ is the free $\mathbf C[x,\xi]$-module with 
a basis $\{g_1,\dots,g_m\}$
and $J(P) = \{j\in {\bf N}:\, 1\leq j\leq m,\,\ord{P_j}=\ord{P}\}$.
\fi
By the theory of the Gr\"obner basis in Weyl algebra, 
the {\it characteristic variety} $\ch{M}$ of $M$ can be computed
explicitly.
For details, see \cite{oaku1994}. 
According to the Bernstein inequality, 
the Krull dimension of the characteristic variety is not less than $n$
(see, e.g., \cite{bjork}).
When the dimension of $\ch{M}$ is equal to $n$, 
the $D_x$-module $M$ is said to be {\it holonomic}.
When the system of differential equations \eqref{diff-eq} induces 
a holonomic $D_x$-module, we call \eqref{diff-eq} a holonomic system.

We denote by $R_x$ 
the ring of differential operators of $n$ variables $x_1, \cdots, x_n$ 
with rational function coefficients.
The left $D_x$-module $\mathbf C(x)\otimes_{\mathbf C[x]}M$ is a left $R_x$-module,
where $\mathbf C(x)$ is the field of rational functions.
When the module $M$ is holonomic, 
$\mathbf C(x)\otimes_{\mathbf C[x]}M$ 
as a linear space over $\mathbf C(x)$ has finite dimension.
This value is called the holonomic rank of $M$, and we denote it by $\rank{M}$.
Let $r$ be the holonomic rank of $M$, 
and let $\{f_1, \dots, f_r\}$ be a basis of $\mathbf C(x)\otimes_{\mathbf C[x]}M$ as 
a linear space over $\mathbf C(x)$.
Then, there exist rational functions $c_{ijk}\in \mathbf C(x)(1\leq i, j, k\leq r)$ 
such that 
\begin{equation}\label{def-pfaff}
\pd{x_i}f_j = \sum_{k=1}^r c_{ijk}f_k \quad (1\leq i, j\leq r)
\end{equation}
in $\mathbf C(x)\otimes_{\mathbf C[x]}M$.
Moreover, the matrices $c_i=(c_{ijk})_{j, k=1}^r\, (1\leq i\leq r)$ satisfy
the integrability condition
\[
\frac{\partial c_i}{\partial x_j} + c_ic_j = \frac{\partial c_j}{\partial x_i} + c_jc_i 
\quad (1\leq i, j \leq r).
\]
We call equation \eqref{def-pfaff} a Pfaffian equation associated with 
the holonomic module $M$.
The union of the zero sets of the denominators of the elements of $c_i$'s 
is called the singular locus of the Pfaffian equation.
Note that a Pfaffian equation associated with $M$ depends on the choice of 
the basis of $\mathbf C(x)\otimes_{\mathbf C[x]}M$, and it is not unique.

\if0
When a vector valued function $g=(g_1, \dots, g_m)'$ satisfies a holonomic system \eqref{diff-eq}, 
there is a Pfaffian equation \eqref{def-pfaff} associated with the holonomic module defined by \eqref{diff-eq}.
Note that a basis $f_i$ of $\mathbf C(x)\otimes_{\mathbf C[x]}M$ can be regarded as a function.
When a path $\gamma:[0, 1]\rightarrow \mathbf C^n$ 
which does not through the singular locus of the Pfaffian equation 
and the value $f=(f_1, \dots, f_r)'$ at $\gamma(0)$ are given, 
we can evaluate $f(\gamma(1))$ as following.
Firstly, we have an ordinary differential equation for the function $f(\gamma(t))$
from the Pfaffian equation \eqref{def-pfaff}.
Next, we have the value of $f(\gamma(1))$ by solving the ordinary differential equation numerically 
with the initial value $f(\gamma(0))$.
Since each $g_i$ can be written as a linear combination of $f_i$'s with rational function coefficient, 
we can evaluate $g(\gamma(1))$ from the value of $f(\gamma(1))$.
The method evaluating $g$ by the above procedure is called holonomic gradient method.
\fi

\section{Integral representation of the probability content of a polyhedron}
\label{sec3}
In this section, we show that the function $\varphi(a, b)$ in \eqref{p0}
can be regarded as a real analytic function.
Since the Heaviside function $H(x)$ is the hyperfunction defined by 
$-\frac{1}{2\pi\sqrt{-1}}\log(-z)$, 
we can expect the function $\varphi(a, b)$ to be expressed in terms of a 
logarithmic function.
However, we cannot find a suitable $d$-simplex for the $d$-form 
obtained by replacing $H\left(\sum_{i=1}^d a_{ij}x_i+b_j\right)$ 
in the integrand of \eqref{p0}
with 
$-\frac{1}{2\pi\sqrt{-1}}
\log\left(-\left(\sum_{i=1}^d a_{ij}x_i+b_j\right)\right)$.
In order to overcome this difficulty, 
we use a decomposition of $\varphi(a, b)$, 
which will be given in \eqref{eq:varphi_decomp}, 
and show that $\varphi(a, b)$ 
can be written as a linear combination of complex integrals.
This implies that $\varphi(a, b)$ is a real analytic function.

First, let us review some notions of polyhedra.
In the remainder of this paper, we will assume that $d$ and $n$ are positive integers.
A subset $H\subset \mathbf R^d$ is called a {\it half-space} 
if $H$ can be written as 
$
H=\{x\in \mathbf R^d | \sum_{i=1}^d a_ix_i +a_0 \geq 0\}
$
for some $a_i, a_0\in \mathbf R$.
A {\it polyhedron} is a finite intersection of half-spaces.
An inequality $\sum_{i=1}^d a_ix_i+b \geq 0$ is called {\it valid} 
for a polyhedron $P$
if all the points of $P$ satisfy the inequality.
We call a subset $S\subset \mathbf R^d$ an {\it affine subspace} 
if $S$ can be written as an intersection of hyperplanes. 
For a subset $S\subset \mathbf R^d$, 
the affine hull of $S$ is the smallest affine subspace 
which contains $S$, and we denote it by $\aff(S)$.
The {\it dimension} of a polyhedron $P$ is the dimension of $\aff(P)$,
and we denote it by $\dim(P)$.
For a polyhedron $P$ and 
an inequality $\sum_{i=1}^d a_ix_i+b \geq 0$ which is valid for $P$,
the intersection  
$$
F = P \cap \left\{x\in\mathbf R^d: \sum_{i=1}^d a_ix_i+b \geq 0 \right\}
$$
is called a {\it face} of $P$.
The dimension of a face $F$ is the dimension of $\aff(F)$.
A {\it facet} of a polyhedron $P$ is a face of $P$ whose dimension equals to 
$\dim(P)-1$.
For details, see \cite{ziegler}.

In order to describe the combinatorial structure of a polyhedron, 
we use the notion of the abstract simplicial complex \cite{edelsbrunner}.
Let $\mathcal F$ be a set consisting of subsets of 
$[n]:=\{1, \, 2, \dots, n\}$.
We call $\mathcal F$ an abstract simplicial complex when
$J\in \mathcal F$ and $J'\subset J$ implies $J'\in \mathcal F$.
Let $\mathcal F$ and $\mathcal F'$ be two abstract simplicial complexes.
We say that $\mathcal F$ is {\it equal} to $\mathcal F'$ 
and denote $\mathcal F=\mathcal F'$ 
when $\mathcal F$ and $\mathcal F'$ are equal as sets.
We say that $\mathcal F$ and $\mathcal F'$ are equivalent 
and denote $\mathcal F\cong\mathcal F'$ 
when there is a bijection $\sigma:\mathcal F\rightarrow \mathcal F'$ such that
$J_1\subset J_2$ if and only if $\sigma(J_1)\subset \sigma(J_2)$.

Let $P\subset \mathbf R^d$ be a polyhedron, and let 
$F_1, \dots, F_n$ be all the facets of $P$.
For each facet $F_j$, there is a unique half-space $H_j\subset \mathbf R^d$ 
that satisfies $(\partial H_j)\cap P = F_j$ and $P \subset H_j$
(see, e.g., exercise 2.14(iv) of Lecture 2 in \cite{ziegler}).
We call $\mathcal H= \{H_1, \dots, H_n\} $ 
{\it the family of the bounding half-spaces for the polyhedron $P$.} 
The {\it nerve} of $\{F_1, \dots, F_n\} $ is the abstract simplicial complex 
defined by 
\[
\mathcal F = \{J\subset [n] : F_J \neq \emptyset \}, 
\quad \left(F_J:= \bigcap_{j\in J} F_j\right).
\]
We also call $\mathcal F$
the {\it abstract simplicial complex associated with the polyhedron $P$}.
When $\mathcal F$ is an abstract simplicial complex associated 
with a polyhedron $P$, 
we have $\{j\} \in \mathcal F$ for any $j\in [n]$.

Next, we introduce the notion of a polyhedron in the ``general position.''
Since we need to consider information for points at infinity, 
we will use the idea of ``homogenization.''
The {\it homogenization} $\hat H$ of a half-space 
$H=\{x\in \mathbf R^d | \sum a_ix_i + a_0 \geq 0\} $
is defined as 
\[
\hat H = \{(x_0, \dots, x_d)\in \mathbf R^{d+1}  | \sum_{i=0}^d a_ix_i\geq 0\}.
\]
For a family of half-spaces $\mathcal H = \{H_1, \dots, H_n\} $, 
we call $\hat{\mathcal H}  =\{\hat H_0, \hat H_1, \dots, \hat H_n \} $
the {\it homogenization} of $\mathcal H$.
Here, we put $\hat H_0  = \{x_0 \geq 0\} $.
We say that a family of half-spaces $\mathcal H=\{H_1, \dots, H_n\} $ 
(or its homogenization $\hat{\mathcal H} =\{\hat H_0, \dots, \hat H_n \} $)
is {\it in general position} 
when, for $J\subset \{0, 1, \dots, n\}$, 
\[
\hat F_J:=
\left(\bigcap_{j\in J}  \partial\hat H_j\right)
\cap
\left(\bigcap_{j=0}^{n}  \hat H_j\right)
\]
is a $d+1-|J|$-dimensional cone 
(i.e., the affine hull of the cone is $d+1-|J|$-dimensional affine space)
or $\{0\} $.
This is somewhat analogous to the ``general position'' for hyperplane arrangements
in \cite[Chap 2. Section 9]{aomoto-kita}, 
but we emphasize that they are different 
(see Example \ref{ex:2}).
The polyhedron $P $ is {\it in general position}
when the family $\mathcal H$ of the bounding half-spaces of $P$ 
is in general position.

\begin{example}\label{ex:2}
Let $d=2$. We define  $H_j\, (1\leq j\leq 4)$ by 
\begin{align*}
H_1 &:= \left\{(x_1, x_2)\in \mathbf R^2 : x_1\geq 0 \right\}, &
H_2 &:= \left\{(x_1, x_2)\in \mathbf R^2 : x_1\leq 1 \right\}, \\
H_3 &:= \left\{(x_1, x_2)\in \mathbf R^2 : x_2\geq 0 \right\}, &
H_4 &:= \left\{(x_1, x_2)\in \mathbf R^2 : x_2\leq 1 \right\}.
\end{align*}
Then, the family of half-spaces $\mathcal H:= \{H_1, H_2, H_3, H_4\}$ is in general position.
However, the family of half-spaces $\mathcal H':= \{H_1, H_2, H_3\}$ is not in general position.
In fact, the homogenization of $\mathcal H$ and $\mathcal H'$ 
can be written as 
\begin{align*}
\hat{\mathcal H}
&= \left\{\hat H_0, \hat H_1, \hat H_2, \hat H_3, \hat H_4\right\}, &
\hat{\mathcal H}' 
&= \left\{\hat H_0, \hat H_1, \hat H_2, \hat H_3\right\}
\end{align*}
where 
\begin{align*}
\hat H_0 &:= \left\{(x_0, x_1, x_2)\in \mathbf R^{2+1} : x_0\geq 0 \right\}, \\
\hat H_1 &:= \left\{(x_0, x_1, x_2)\in \mathbf R^{2+1} : x_1\geq 0 \right\}, &
\hat H_2 &:= \left\{(x_0, x_1, x_2)\in \mathbf R^{2+1} : x_1\leq x_0 \right\}, \\
\hat H_3 &:= \left\{(x_0, x_1, x_2)\in \mathbf R^{2+1} : x_2\geq 0 \right\}, &
\hat H_4 &:= \left\{(x_0, x_1, x_2)\in \mathbf R^{2+1} : x_2\leq x_0 \right\}. 
\end{align*}
Calculating $\hat F_J$ for each $J\subset \{0, 1, 2, 3, 4\}$, 
we can show that $\hat{\mathcal H}$ is in general position.
For example, 
the set 
$
\hat H_0 \cap 
\partial\hat H_1 \cap \hat H_2 \cap \partial\hat H_3 \cap \hat H_4
$
is a $d+1-2$-dimensional cone, 
and the set 
$
\hat H_0 \cap 
\partial\hat H_1 \cap \partial\hat H_2 \cap \hat H_3 \cap \hat H_4
$
is equal to  $\{0\}$.
On the other hand, 
the family $\hat{\mathcal H}'$ is not in general position 
since the dimension of 
\[
\aff\left(
\partial\hat H_0 \cap 
\partial\hat H_1 \cap \partial\hat H_2 \cap \hat H_3
\right)
= \{x \in \mathbf R^{d+1}: x_0=x_1=0\}
\]
is not equal to $d+1-3=0$.

Hence, 
the polyhedron $\bigcap_{j=1}^4H_j$ in Figure \ref{fig:1}(a) 
is in general position, but 
the polyhedron $\bigcap_{j=1}^3H_j$ in Figure \ref{fig:1}(b) 
is not in general position.

We note that the hyperplane arrangement 
$\{\partial H_1, \, \partial H_2, \, \partial H_3, \, \partial H_4\}$
is not in general position in the sense of \cite{aomoto-kita}, 
but $\mathcal H$ is in general position.
\end{example}
\begin{example}
Let $H_5:= \left\{(x_1, x_2)\in \mathbf R^2 : x_1+x_2\geq 0 \right\}$.
Using the notation in Example \ref{ex:2}, 
the family of the half-space $\{H_j:1\leq j\leq 5\}$, 
which is shown in Figure \ref{fig:1}(c), 
is not in general position.
However, the polyhedron $\bigcap_{j=1}^5 H_j$ is in general position
since the family of the bounding half-spaces is $\{H_j:1\leq j\leq 4\}$.
\end{example}
\begin{figure}[htbp]
\center
\begin{tabular}{ccc}
\includegraphics[width=0.25\hsize]{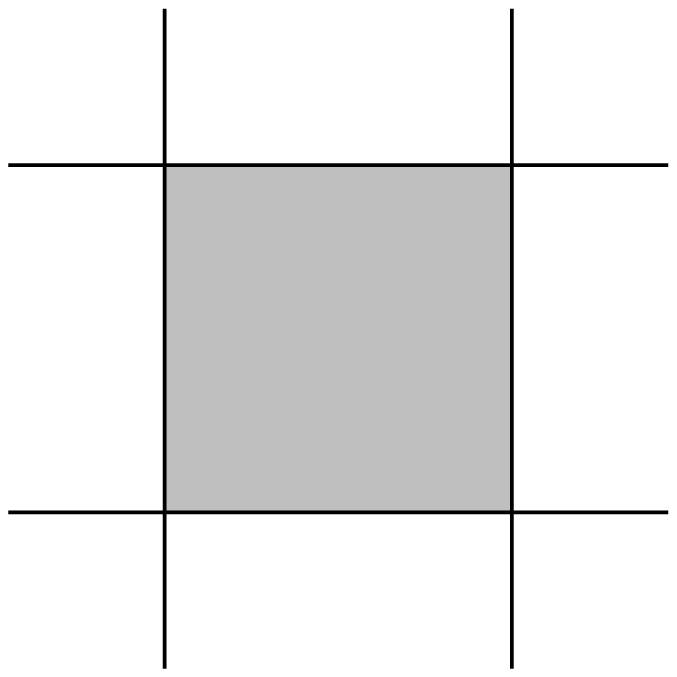}&
\includegraphics[width=0.25\hsize]{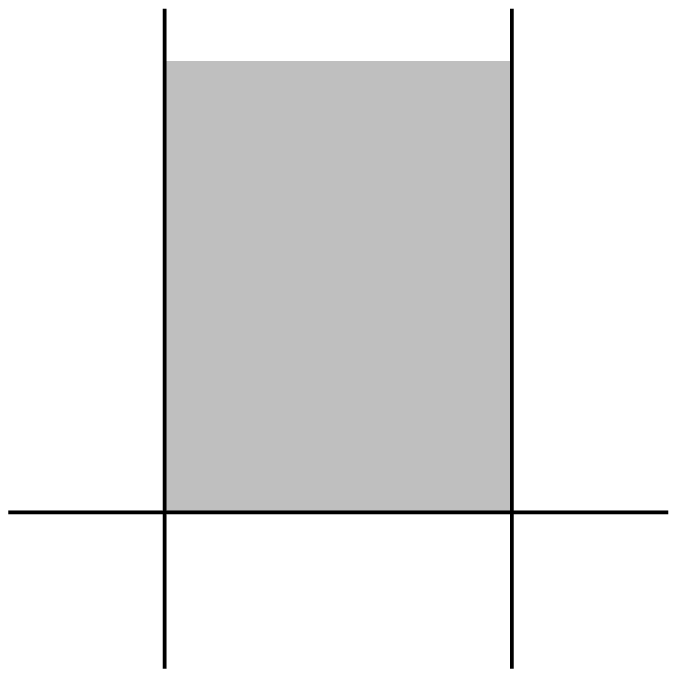}&
\includegraphics[width=0.25\hsize]{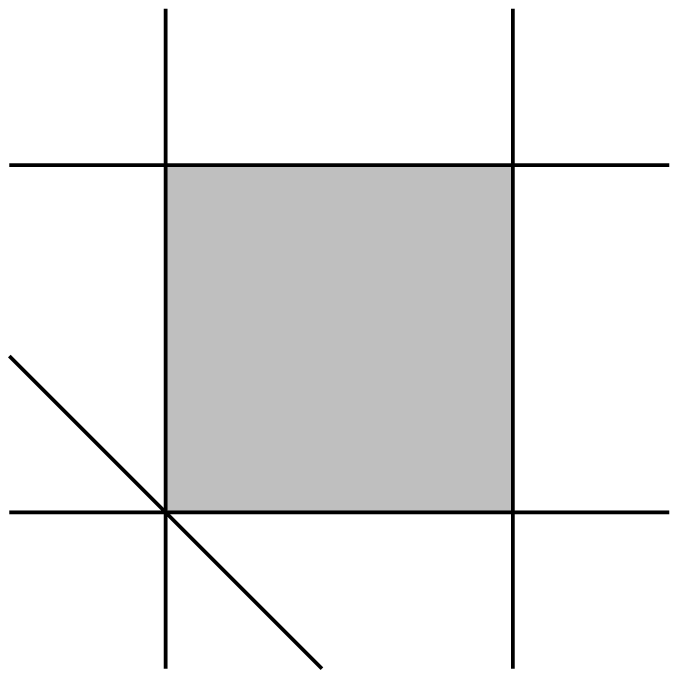}\\
(a) & (b) & (c) 
\end{tabular}
\caption{Examples of polyhedra}\label{fig:1}
\end{figure}
\begin{remark}
Note that our definition of general position is more restrictive than 
that of \cite{naiman-wynn}, 
and it is less restrictive than that of \cite{kuriki-miwa-hayter}.
For example, the polyhedron $\bigcap_{j=1}^3H_j$ in Example \ref{ex:2} 
is in general position by the definition in \cite{naiman-wynn};
and the polyhedron $\bigcap_{j=1}^4H_j$ in Example \ref{ex:2} 
is not in general position by the definition in \cite{kuriki-miwa-hayter}.
\end{remark}

Let $P\subset \mathbf R^d$ be a polyhedron.
Suppose the family of bounding half-spaces for $P$ is given by 
\[
\left\{
x \in \mathbf R^d: \sum_{i=1}^d\tilde a_{ij}x_i+\tilde b_j \geq 0
\right\}
\quad (1\leq j\leq n).
\]
We denote by $\tilde a$ the $d\times n$ matrix $(\tilde a_{ij})$, 
and by $\tilde b$ the vector $(\tilde b_1, \dots, \tilde b_n)$.
Let $F_j$ be the intersection of $P$ and 
the hyperplane $\{x\in\mathbf R^d:\sum_{i=1}^d\tilde a_{ij}x_i+\tilde b_j=0\}$.
The sets $F_1, \dots, F_n$ are all of the facets of $P$.
Let $\mathcal F$ be the nerve of $\{F_1, \dots, F_n\}$, 
which is the abstract simplicial complex of $P$.

Edelsbrunner showed the inclusion--exclusion identity 
for the indicator function of a polyhedron \cite[Lemma 5.1.]{edelsbrunner}.
\begin{proposition} [Edelsbrunner]
If $\mathcal F$ is the abstract simplicial complex associated with a  polyhedron $P$, 
then the indicator function of $P$ can be written as 
\[
\mathbf 1_P = \sum_{J\in\mathcal F} \prod_{j\in J} \left(\mathbf 1_{H_j} -1\right).
\]
\end{proposition}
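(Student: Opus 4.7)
The plan is to verify the identity pointwise in $x \in \mathbf R^d$. First, if $x \in P$, then every factor $\mathbf 1_{H_j}(x) - 1$ vanishes, so only the term indexed by $J = \emptyset \in \mathcal F$ survives, contributing the empty product $1 = \mathbf 1_P(x)$; this case is immediate.

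Otherwise, I set $K = K(x) := \{j \in [n] : x \notin H_j\}$, which is nonempty. For any $J \subset [n]$, the product $\prod_{j \in J}(\mathbf 1_{H_j}(x) - 1)$ equals $(-1)^{|J|}$ when $J \subset K$ and vanishes otherwise. The identity therefore reduces to showing
$$\sum_{J \in \mathcal F,\ J \subset K} (-1)^{|J|} = 0,$$
i.e., that the restricted simplicial complex $\mathcal F|_K := \{J \in \mathcal F : J \subset K\}$ has Euler characteristic (including the empty face) equal to zero.

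To prove this I would exhibit a cone apex in $\mathcal F|_K$. Pick any $x_0 \in \mathrm{int}(P)$ and let $q$ be the point at which the segment $[x_0, x]$ first exits $P$. Then $q \in \partial P$, and every facet $F_{j_0}$ containing $q$ satisfies $j_0 \in K$, since the segment crosses $\partial H_{j_0}$ outward at $q$. The central claim is that such a $j_0$ is a cone apex: for every $J \in \mathcal F|_K$, one has $J \cup \{j_0\} \in \mathcal F$. Granting this, pairing each $J$ not containing $j_0$ with $J \cup \{j_0\}$ gives a sign-reversing involution on $\mathcal F|_K$, and the alternating sum collapses.

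The main obstacle is precisely this cone property: given $J \subset K$ with $F_J \neq \emptyset$, showing $F_J \cap F_{j_0} \neq \emptyset$. This is a convex-geometric assertion equivalent (via the nerve theorem, applied to the good cover by convex facets) to the contractibility of the ``visible'' portion $V := \bigcup_{j \in K} F_j \subset \partial P$ of the boundary seen from $x$. I would prove contractibility of $V$ by constructing a deformation retraction onto $q$ through radial projection from $x$, handling unbounded $P$ by either truncating with a large ball around $x_0$ or invoking the homogenization introduced earlier and passing to a limit.
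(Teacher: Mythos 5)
Your pointwise reduction is fine: for $x\notin P$ with $K=\{j: x\notin H_j\}$ the identity becomes $\sum_{J\in\mathcal F,\,J\subset K}(-1)^{|J|}=0$, and only ``visible'' sets $K$ need to be handled. The genuine gap is the cone-apex claim, which is false as stated. Take $P$ a regular hexagon with edges labelled cyclically, and let $x$ lie far above it so that $K=\{1,2,3\}$ consists of the three consecutive upper edges. If the interior point $x_0$ is chosen near edge $1$, the segment $[x_0,x]$ can exit $P$ through the relative interior of edge $1$ (a short computation with, e.g., $x_0=(0.6,0.3)$ and $x=(-0.3,10)$ for the standard unit hexagon confirms this), so your recipe gives $j_0=1$. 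But $J=\{3\}\in\mathcal F|_K$ while $\{1,3\}\notin\mathcal F$, since edges $1$ and $3$ share no vertex; hence the pairing $J\mapsto J\cup\{j_0\}$ does not preserve $\mathcal F|_K$ and the sign-reversing involution collapses. (The alternating sum is still $1-3+2=0$ here, so the proposition survives; your mechanism does not.) Note also that contractibility of the visible region $V=\bigcup_{j\in K}F_j$ is \emph{not} equivalent to the cone property: in this example $V$ is a contractible path of three edges whose nerve is not a cone over the facet your construction selects, and in higher dimensions the visible nerve can be contractible without being a cone over any vertex at all. So no choice of a single apex can rescue a purely combinatorial involution.

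What remains of your plan --- nerve theorem for the closed convex cover $\{F_j\}_{j\in K}$ plus contractibility (or just Euler characteristic $1$) of $V$ --- is indeed the right route and is essentially the content of the result in the literature, but in your write-up it is only sketched: the deformation retraction of $V$ onto $q$ by radial projection from $x$ is asserted, not proved, and the unbounded case is waved at (truncating by a large ball changes the facet set and hence the nerve, so the ``passing to a limit'' step needs an actual argument, e.g., via the homogenization/recession-cone structure). For comparison, the paper itself does not prove this proposition at all: it is quoted as Lemma 5.1 of Edelsbrunner, where the topological work (the analogue of your contractibility claim) is carried out; the paper's own technical contribution in the appendix is the stability of the complex $\mathcal F$ under perturbation (Theorem 3), not this identity. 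To make your proof complete you must either carry out the contractibility/nerve argument rigorously, including unbounded $P$, or replace it by a shelling-type or Morse-type argument on the visible boundary.
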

\begin{example}
For the polyhedron $\bigcap_{j=1}^4H_j$ in Example \ref{ex:2}, 
the inclusion--exclusion identity can be written as follows:
\begin{align*}
\mathbf 1_{\bigcap_{j=1}^4H_j}
&= 1
+(\mathbf 1_{H_1}-1) +(\mathbf 1_{H_2}-1) +(\mathbf 1_{H_3}-1) +(\mathbf 1_{H_4}-1) 
\\
&\quad 
+(\mathbf 1_{H_1}-1)(\mathbf 1_{H_3}-1) 
+(\mathbf 1_{H_1}-1)(\mathbf 1_{H_4}-1) \\
&\quad
+(\mathbf 1_{H_2}-1)(\mathbf 1_{H_3}-1) 
+(\mathbf 1_{H_2}-1)(\mathbf 1_{H_4}-1).
\end{align*}
The first term of the right-hand side corresponds to the empty set.
\end{example}
With the Heaviside function $H$, 
the Edelsbrunner's identity can be written as 
\[
\prod_{j=1}^n H\left(\sum_{i=1}^d \tilde a_{ij}x_i+\tilde b_j\right)
 = \sum_{J\in\mathcal F} \prod_{j\in J} 
\left(H\left(\sum_{i=1}^d \tilde a_{ij}x_i+\tilde b_j\right) -1\right).
\]
Under the general position assumption, 
this identity can be generalized as follows.
\begin{theorem} \label{ptb:8}
In the notation above, 
if the polyhedron $P$ is  in general position, 
then there exists a neighborhood $U$ of $(\tilde a, \tilde b)$ 
such that the equation
\[
\prod_{j=1}^n H\left(\sum_{i=1}^d a_{ij}x_i+b_j\right)
 = \sum_{J\in\mathcal F} \prod_{j\in J} 
\left(H\left(\sum_{i=1}^d a_{ij}x_i+b_j\right) -1\right)
\]
holds for all $(a, b, x)\in U\times \mathbf R^d$.
\end{theorem}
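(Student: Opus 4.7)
The central idea is that, under the general position hypothesis, the nerve is locally constant in the parameters. More precisely, I will show there exists a neighborhood $U$ of $(\tilde a, \tilde b)$ such that the abstract simplicial complex $\mathcal F(a,b)$ of the perturbed polyhedron $P(a,b) := \{x \in \mathbf R^d : \ell_j^{a,b}(x) \geq 0,\ 1 \leq j \leq n\}$, with $\ell_j^{a,b}(x) := \sum_i a_{ij} x_i + b_j$, coincides with $\mathcal F$ for every $(a,b) \in U$. Granted this, the Edelsbrunner identity (the Proposition above) applied to $P(a,b)$ reads $\mathbf 1_{P(a,b)}(x) = \sum_{J \in \mathcal F} \prod_{j \in J}(\mathbf 1_{H_j(a,b)}(x) - 1)$, which is the claimed identity after writing $\mathbf 1_{H_j(a,b)}(x) = H(\ell_j^{a,b}(x))$ (with the convention $H(0)=1$).

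Under general position, $J \in \mathcal F$ is equivalent to $\hat F_J \neq \{0\}$: if $F_J = \emptyset$ while $\hat F_J$ had positive dimension, then $\hat F_J \subset \partial \hat H_0$, so $\hat F_J = \hat F_{J \cup \{0\}}$, contradicting the dimension mismatch $d+1-|J| \neq d-|J|$ forced on both sides by general position. Thus it suffices to prove, for each $J \subset [n]$: (i) if $J \in \mathcal F$, then $\hat F_J(a,b) \neq \{0\}$ near $(\tilde a, \tilde b)$; and (ii) if $J \notin \mathcal F$, then $\hat F_J(a,b) = \{0\}$ near $(\tilde a, \tilde b)$.

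For (i), I would fix a relative-interior point $p_J \in F_J(\tilde a, \tilde b)$, so that $\ell_j(p_J) = 0$ for $j \in J$ and $\ell_k(p_J) > 0$ for $k \notin J$. Since $F_J \neq \emptyset$ and $\dim \hat F_J = d+1-|J|$, the rank of the augmented matrix $(a_{\cdot j}, b_j)_{j \in J}$ equals $|J|$ and the system is consistent, so the linear parts $(a_{\cdot j})_{j \in J}$ are linearly independent. The affine system $\ell_j^{a,b}(x) = 0$ ($j \in J$) therefore has a continuous family of solutions $p_J(a,b)$ near $p_J$, and the strict inequalities $\ell_k^{a,b}(p_J(a,b)) > 0$ for $k \notin J$ persist on a small neighborhood, yielding $p_J(a,b) \in F_J(a,b)$ and so $J \in \mathcal F(a,b)$.

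For (ii), I would use the homogenization to compactify. On the compact hemisphere $\Sigma := \{(x_0,x) \in \mathbf R^{d+1}: x_0^2 + |x|^2 = 1,\ x_0 \geq 0\}$, the set $\hat F_J(a,b) \cap \Sigma$ is cut out by a finite list of continuous equations and inequalities, and is empty at $(\tilde a, \tilde b)$. A standard compactness argument (any sequence $p_n \in \hat F_J(a_n,b_n) \cap \Sigma$ with $(a_n,b_n) \to (\tilde a,\tilde b)$ would subconverge to a point of $\hat F_J(\tilde a,\tilde b) \cap \Sigma = \{0\} \cap \Sigma = \emptyset$) shows the emptiness persists on a neighborhood. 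Since $\hat F_J(a,b)$ is a cone in $\{x_0 \geq 0\}$, emptiness on $\Sigma$ forces $\hat F_J(a,b) = \{0\}$, and hence $F_J(a,b) = \emptyset$. Intersecting the finitely many neighborhoods obtained as $J$ ranges over $2^{[n]}$ produces the required $U$. The main obstacle is step (ii): a perturbation can in principle create ``faces at infinity'' that acquire finite points for arbitrarily small changes in $(a,b)$, and it is precisely this scenario that the homogenization built into the definition of general position is designed to rule out.
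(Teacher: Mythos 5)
Your strategy -- homogenize, show the nerve is locally constant (nonempty faces persist via a point satisfying the complementary inequalities strictly, faces ``at infinity'' stay $\{0\}$ by compactness on the hemisphere), then invoke Edelsbrunner -- is the same circle of ideas as the paper's appendix (Lemma \ref{ptb:1}, Corollary \ref{ptb:11}, Theorem \ref{ptb:5}; your step (ii) is essentially STEP 4 of Theorem \ref{ptb:5}). But two steps are not justified as written. In (i), the inference from $F_J\neq\emptyset$ and $\dim \hat F_J=d+1-|J|$ to ``the augmented matrix $(a_{\cdot j},b_j)_{j\in J}$ has rank $|J|$'' goes the wrong way: $\hat F_J$ is contained in the solution set of the homogeneous system, whose dimension is $d+1-r$, so you only get $r\le |J|$, which is vacuous. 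That the rank is exactly $|J|$ (equivalently, that $\aff(\hat F_J)$ is the whole intersection of the hyperplanes) is precisely the nontrivial content of Lemma \ref{ptb:1} and Corollary \ref{ptb:13}; it needs an argument, e.g.\ the paper's affine-hull argument, or: if the homogenized normals indexed by $J$ were dependent, there would be $J'\subsetneq J$ with $\hat F_J=\hat F_{J'}$, and general position forces unequal dimensions unless both are $\{0\}$, contradicting $F_J\neq\emptyset$. The same issue hides in ``relative-interior point, so $\ell_k(p_J)>0$ for $k\notin J$'': you need $F_J\not\subset\partial H_k$ for $k\notin J$, again a dimension statement coming from general position rather than a free fact.

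The more structural gap is the last step. The quoted Proposition of Edelsbrunner is stated for the abstract simplicial complex \emph{associated with the polyhedron}, i.e.\ the nerve of its facets together with their bounding half-spaces. What you establish is that $\mathcal F$ equals the nerve of the family $\{\partial H_j(a,b)\cap P(a,b)\}_{j\in[n]}$; a priori some of these sets could be nonempty yet of dimension less than $d-1$ (a ``tangent'' or redundant half-space), in which case the complex associated with $P(a,b)$ is a different object and the Proposition does not directly yield the identity summed over $\mathcal F$. This identification is exactly what the paper's Lemma \ref{ptb:4} supplies, via the Farkas lemma, and it is why the paper also tracks persistence of general position (Theorem \ref{ptb:5}) rather than only persistence of the nerve. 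Within your framework the gap is fixable without Farkas: your step (i) applied to singletons $J=\{j\}$ gives a point of $\partial H_j(a,b)\cap P(a,b)$ at which all other constraints are strict, so near that point $P(a,b)$ coincides with $H_j(a,b)$; hence $P(a,b)$ is $d$-dimensional and each such set is a facet, and every facet of $\bigcap_j H_j(a,b)$ is of this form. But this verification must be made explicit before Edelsbrunner's identity can be applied with the fixed complex $\mathcal F$.
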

Our proof of Theorem \ref{ptb:8} is technical, 
and it is independent from the other parts of this paper; thus, 
it is given in the appendix.

Consider $n$ polynomials 
$f_j(a, b, x) = \sum_{i=1}^d a_{ij}x_i +b_j \, (1\leq j\leq n)$
with variables $a_{ij}, \, b_j, \, x_i \, (1\leq i\leq d, 1\leq j\leq n)$, 
and let 
\begin{equation}\label{chiF}
\chi_F(a, b, x)= \prod_{j\in F} \left(H(f_j(a, b, x))-1\right)
\end{equation}
for $F\in \mathcal F$.
Note that $\chi_\emptyset(a, b, x)=1$.
We put 
\[
\varphi_F(a, b) 
=  \int_{\mathbf R^d} \frac{1}{(2\pi)^{d/2}}
\exp(-\frac{1}{2}\sum_{i=1}^dx_i^2) \chi_F(a, b, x) dx
\quad ( F\in \mathcal F).
\]
By Theorem \ref{ptb:8}, 
the function $\varphi(a, b)$ in \eqref{p0} can be decomposed as 
\begin{equation}\label{eq:varphi_decomp}
\varphi(a, b) = \sum_{F\in {\cal F}} \varphi_F(a, b)
\end{equation}
on a neighborhood of $(\tilde a, \tilde b)$
if the polyhedron $P$ is in general position.

In order to give analytic continuations of 
the function $\varphi(a, b)$, 
it is enough to consider $\varphi_F(a, b)$.
For $F\in\mathcal F$, 
let $\alpha_F(a)$ be an $|F|\times |F|$ matrix, 
where $|F|$ is the number of the elements in $F$ and 
\begin{equation}\label{alphaF}
\alpha_F(a)=(\alpha_{ij}(a))_{i, j\in F}, \quad
\alpha_{ij}(a) = \sum_{k=1}^d a_{ki}a_{kj} \quad (1\leq i, j\leq n).
\end{equation}
This is a submatrix of the Gram matrix of $a$.
The matrices $\alpha_F(a)$ are symmetric and positive semidefinite.
Since the function $\varphi_F(a, b)$ can be written as 
\[
\int_{\mathbf R^d} \frac{1}{(2\pi)^{d/2}}
(-1)^{|F|}
\exp(-\frac{1}{2}\sum_{i=1}^dx_i^2) \prod_{j\in F} H(-f_j(a, b, x)), 
\]
we can expect that $\varphi_F(a, b)$ is written by a $d$-simplex and 
the $d$-form
\[
\frac{1}{(2\pi)^{d/2}} \exp(-\frac{1}{2}\sum_{i=1}^dz_i^2)
\prod_{j\in F} \left(\frac{\log(f_j(a, b, z))}{2\pi\sqrt{-1}}\right) dz.
\]
In fact, we can find a suitable $d$-simplex and thus have 
Proposition \ref{prop:c-int}.

\begin{lemma}\label{lem:c-int}
If the polyhedron $P$ is in general position, 
then for $F\in \mathcal F$, 
the value of $\varphi_F(\tilde a, \tilde b )$ can be written as 
\begin{equation}\label{int1}
\sum_{\lambda \in \{\pm 1\}^{|F|}} \int_{\gamma^\lambda}
\frac{1}{(2\pi)^{d/2}} \exp(-\frac{1}{2}\sum_{i=1}^dz_i^2)
\prod_{j\in F} \left(\frac{\log(f_j(\tilde a, \tilde b, z))}{2\pi\sqrt{-1}}\right) dz.
\end{equation}
Here, for $\lambda \in \{\pm 1\}^{|F|}$, 
$\gamma^\lambda$ is a smooth map from $\mathbf R^d$ to $\mathbf C^d$.
We suppose the multivalued function $\log$  satisfies $\log(1)=0$
and the branch cut is $\{z\in \mathbf C:\Re(z) \leq 0\}$. 
\end{lemma}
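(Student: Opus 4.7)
The plan is to realize $\chi_F$ via the hyperfunction representation of the Heaviside function and then convert the resulting boundary-value integrals into bona fide complex integrals by deforming the contour $\mathbf R^d$ into $\mathbf C^d$. With the branch cut on $\{\Re(z)\le 0\}$, for $f\in\mathbf R$ one has $H(-f) = \tfrac{1}{2\pi\sqrt{-1}}\bigl[\log(f+\sqrt{-1}\,0)-\log(f-\sqrt{-1}\,0)\bigr]$. Substituting into $\chi_F = (-1)^{|F|}\prod_{j\in F}H(-f_j)$ and expanding yields
\[
\chi_F(\tilde a,\tilde b,x) = (-1)^{|F|}\sum_{\lambda\in\{\pm 1\}^{|F|}}\Bigl(\prod_{j\in F}\lambda_j\Bigr)\prod_{j\in F}\frac{\log(f_j(\tilde a,\tilde b,x)+\sqrt{-1}\,\lambda_j\,0)}{2\pi\sqrt{-1}},
\]
so that multiplying by the Gaussian weight and integrating over $\mathbf R^d$ expresses $\varphi_F(\tilde a,\tilde b)$ as a signed sum, indexed by $\lambda$, of boundary-value integrals.

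Next I would construct each $\gamma^\lambda$. Since $F\in\mathcal F$ forces $|F|\le d$, applying the general-position hypothesis to the homogenized subset $F\cup\{0\}$ shows that the columns $(\tilde a_{\cdot,j})_{j\in F}$ are linearly independent in $\mathbf R^d$. Hence for every $\lambda\in\{\pm 1\}^{|F|}$ there exists $v^\lambda\in\mathbf R^d$ with $\sum_{i=1}^d \tilde a_{ij}v^\lambda_i = \lambda_j$ for all $j\in F$. I define $\gamma^\lambda(x) := x+\sqrt{-1}\,v^\lambda$ and choose its orientation, for instance by post-composing with a sign-$\pm 1$ coordinate reflection, so as to absorb the factor $(-1)^{|F|}\prod_j\lambda_j$ produced by the hyperfunction expansion. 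On the image of $\gamma^\lambda$ one then has $\Im f_j(\gamma^\lambda(x)) = \lambda_j\ne 0$ for each $j\in F$, so $\log(f_j(z))$ stays on the principal branch and the entire integrand of \eqref{int1} is holomorphic in a tubular neighborhood of the straight-line homotopy $\{x+\sqrt{-1}\,tv^\lambda : 0\le t\le 1\}$.

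To conclude I would justify the contour shift from $\mathbf R^d$ to $\gamma^\lambda$. On the homotopy, $\Re(\sum_i z_i^2) = \sum_i x_i^2 - \sum_i (v^\lambda_i)^2$, so the estimate $|e^{-\frac12\sum z_i^2}|\le e^{|v^\lambda|^2/2}\,e^{-|x|^2/2}$ gives Gaussian decay uniform in $t\in[0,1]$, annihilating any contribution at $|x|\to\infty$. A one-variable-at-a-time Cauchy shift (or Stokes' theorem on a truncated tube followed by removal of the truncation) then delivers, for each $\lambda$,
\[
\int_{\mathbf R^d} e^{-\frac12\sum x_i^2}\prod_{j\in F}\frac{\log(f_j+\sqrt{-1}\,\lambda_j\,0)}{2\pi\sqrt{-1}}\,dx = \int_{\gamma^\lambda} e^{-\frac12\sum z_i^2}\prod_{j\in F}\frac{\log(f_j(z))}{2\pi\sqrt{-1}}\,dz,
\]
and summing over $\lambda$ (with the sign already absorbed into $\gamma^\lambda$) yields formula \eqref{int1}.

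The main obstacle is that the boundary value $\log(f_j+\sqrt{-1}\,\lambda_j\,0)$ is only locally integrable, possessing a logarithmic singularity along $\{f_j=0\}$, so complex-analytic reasoning cannot be applied directly on $\mathbf R^d$. I would handle this by regularization: replace each $\log(f_j+\sqrt{-1}\,\lambda_j\,0)$ by $\log(f_j+\sqrt{-1}\,\lambda_j\,\epsilon)$ for small $\epsilon>0$ (where the integrand is globally holomorphic in a slab around $\mathbf R^d$), carry out the contour shift in that slab where all justifications are elementary, and let $\epsilon\to 0^+$, using the Gaussian as a dominating function to interchange the limit with the integral.
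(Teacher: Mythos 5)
Your argument is correct, but it runs in the opposite direction from the paper's and uses different contours, so it is worth comparing the two. The paper starts from the right-hand side of \eqref{int1}: it builds $\gamma^\lambda$ from a linear change of variables adapted to the face (the columns of $\tilde a$ indexed by $F$, completed by an orthonormal basis of the orthogonal complement, with a bounded modulation $\varepsilon$), and then evaluates the sum over $\lambda$ recursively, one variable at a time, using the $2\pi\sqrt{-1}$ jump of $\log$ across the cut to land on the Gaussian integral over $\{f_j\le 0,\ j\in F\}$. You instead go forward: write $H(-f_j)$ as the difference of boundary values $\log(f_j\pm\sqrt{-1}\,0)$, expand the product into $2^{|F|}$ terms indexed by $\lambda$, and for each term shift $\mathbf R^d$ to the translate $\mathbf R^d+\sqrt{-1}\,v^\lambda$ with $\sum_i\tilde a_{ij}v^\lambda_i=\lambda_j$ ($j\in F$), justified by $\epsilon$-regularization, a Cauchy/Stokes shift with Gaussian decay, and dominated convergence. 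Both proofs rest on the same two ingredients, the linear independence of the columns $(\tilde a_{\cdot j})_{j\in F}$ and the branch-cut jump of $\log$; what your version buys is explicit bookkeeping of the factor $(-1)^{|F|}\prod_{j\in F}\lambda_j$, which you absorb into the orientation of $\gamma^\lambda$ — this is legitimate since the lemma only asks for some smooth maps, and it in fact reconciles the statement with the sign that the paper only restores inside the product in \eqref{varphi_F} of Proposition \ref{prop:c-int}. Three small repairs: (i) the orientation twist must be implemented by reparametrizing the domain (pre-composition with a reflection of a parameter coordinate), not by post-composing with a reflection of $\mathbf C^d$, which would move the contour and change $f_j$ on it; (ii) the linear independence of $(\tilde a_{\cdot j})_{j\in F}$ is not an immediate consequence of the general-position condition applied to the single set $F\cup\{0\}$ — knowing that $\hat F_{F\cup\{0\}}$ has dimension $d-|F|$ does not by itself bound the rank of the normals from below; the fact is exactly Corollary \ref{ptb:13}, whose proof needs the affine-hull argument of Lemma \ref{ptb:1}, so invoke that rather than your one-line justification; (iii) in the limit $\epsilon\to 0^+$ on the unshifted side the Gaussian alone does not dominate — you need the locally integrable majorant $\prod_{j\in F}\bigl(|\log|f_j||+C\log(2+|x|)\bigr)$ times the Gaussian, which exists precisely because the hyperplanes $\{f_j=0\}$, $j\in F$, meet transversally.
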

\begin{proof}
Let $s$ be the number of elements in $F$.
By the general position assumption, $s$ is not greater than $d$.
We denote by $j(1), \dots, j(s)$ all of the elements of $F$.
Since the polyhedron $P$ is in general position, 
the vectors $u_k=(\tilde a_{1j(k)}, \dots, \tilde a_{dj(k)})^\top(1\leq k \leq s)$
 are linearly independent
\comment{(see Corollary \ref{ptb:13} in Section \ref{sec:6})}.
Consequently, the determinant $\alpha_F(\tilde a)$ is not zero.
Let $u_k (s < k\leq d)$ be an orthonormal basis of 
the orthogonal complement of the subspace
$\sum_{k=1}^s \mathbf Ru_k \subset\mathbf R^d$.
We denote the vector $u_k$ by $u_k=(u_{1k}, \dots, u_{dk})^\top$.
The matrix $u=(u_{ij})$ is regular, and we set $u^{-1}=(u^{ij})$.
Without loss of generality, we can assume $\det u > 0$.
Under this assumption, we have $\det u = \sqrt{|\det\alpha_F(\tilde a)|}$.
Let $\varepsilon(t)$ be a positive bounded function on $\mathbf R$, i.e., 
$\inf_{t\in\mathbf R} \varepsilon(t) > 0$ 
and $\sup_{t\in \mathbf R}\varepsilon(t) < \infty$.
For a vector $(\lambda_1, \dots, \lambda_s) \in \{-1, 1\}^s$, 
we define $\gamma^\lambda:\mathbf R^d\rightarrow \mathbf C^d$ by
\[
\gamma^\lambda_j(t)
= \sum_{i=1}^d u^{ij} 
  \left(\lambda_it_i + \lambda_i\sqrt{-1}\varepsilon(\lambda_it_i)\right)
  \quad (t\in \mathbf R^d).
\]
Here, we put $\lambda_j = 1$ for $s<j\leq d$.

By the coordinate transformation $w_j=\sum_{i=1}^du_{ij}z_i$, 
the integral \eqref{int1} can be written as 
\[
\frac{1}{(2\pi)^{d/2}} 
\frac{1}{|\alpha_F(\tilde a )|^{1/2}}
\sum_{\lambda \in \{\pm 1\}^s} \int_{\gamma'_\lambda}
\exp(-\frac{1}{2}\sum_{i=1}^d (\sum_{j=1}^d u^{ji}w_j)^2) 
\prod_{k=1}^s \left(\frac{\log(w_k+\tilde b_k)}{2\pi\sqrt{-1}}\right) dw,
\]
where 
$
\gamma'_{\lambda j}(t) = 
\lambda_jt_j + \lambda_j\sqrt{-1}\varepsilon(\lambda_jt_j)\, (t\in \mathbf R^d).
$
Calculating this integral recursively, 
we have
\[
\frac{1}{(2\pi)^{d/2}} \frac{1}{|\alpha_F(\tilde a )|^{1/2}}
\int_E \exp(-\frac{1}{2}\sum_{i=1}^d (\sum_{j=1}^d u^{ji}y_j)^2) dy,
\]
where
$E = \{y\in \mathbf R^d: y_j + \tilde b_j\leq 0, \, j\in F\}$.
By the coordinate transformation $x_i=\sum_{j=1}^d u^{ji}y_j$, the above integral
is 
\[
\frac{1}{(2\pi)^{d/2}}
\int_{f_j(\tilde a, \tilde b, x)\leq 0, j\in F} \exp(-\frac{1}{2}\sum_{i=1}^d x_i^2) dx, 
\]
which is equal to $\varphi_F(\tilde a, \tilde b )$.
\end{proof}
Moreover, we have the following proposition.
\begin{proposition}\label{prop:c-int}
Let $U_F$ be a domain $\{(a, b): \det \alpha_F(a) \neq 0\}$.
The function $\varphi_F(a, b)$ can be written as 
\begin{equation}\label{varphi_F}
\sum_{\lambda \in \{\pm 1\}^{|F|}} \int_{\gamma_{\lambda}}
\frac{1}{(2\pi)^{d/2}} \exp(-\frac{1}{2}\sum_{i=1}^dz_i^2)
\prod_{j\in F} \left(-\frac{\log(f_j(a, b, z))}{2\pi\sqrt{-1}}\right) dz
\end{equation}
on a connected open neighborhood of $(\tilde a, \tilde b )$ in $U_F$.
Here, $\gamma_{\lambda}$ denotes the integral path $\gamma^\lambda$ in 
Lemma \ref{lem:c-int}.
And we suppose the multivalued function $\log$  satisfies $\log(1)=0$
and the branch cut is $\{z\in \mathbf C:\Re(z) \leq 0\}$. 
\end{proposition}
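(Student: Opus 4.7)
The plan is to lift the identity of Lemma \ref{lem:c-int}, which a priori holds only at the single point $(\tilde a, \tilde b)$, to a full neighborhood of that point, by combining three ingredients: holomorphic dependence of the right-hand side of \eqref{varphi_F} on $(a,b)$, the applicability of Lemma \ref{lem:c-int} at each nearby point with an adapted path, and a Cauchy-type homotopy between those adapted paths and the fixed path $\gamma^\lambda$.

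First, I would check that the right-hand side of \eqref{varphi_F}, call it $\Psi_F(a,b)$, is well-defined and holomorphic on an open neighborhood $V$ of $(\tilde a, \tilde b)$ in $U_F$. At $(\tilde a, \tilde b)$ the construction of $\gamma^\lambda$ in Lemma \ref{lem:c-int} guarantees, in the $w$-coordinates used there, that $f_{j(k)}(\tilde a, \tilde b, \gamma^\lambda(t))$ has imaginary part $\lambda_k\varepsilon(\lambda_k t_k)$, which is bounded away from zero, so the branch cut $\{\Re z\leq 0\}$ of $\log$ is never touched. By continuity of $f_j$ in $(a,b)$, the same avoidance persists for $(a,b)$ in a small enough $V$. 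Absolute convergence of the integral and holomorphicity of $\Psi_F$ then follow from the Gaussian decay of $\exp(-\frac{1}{2}\sum z_i^2)$ along $\gamma^\lambda$ (the imaginary parts of the $z_i$ are uniformly bounded because $\varepsilon$ is bounded) and from uniform convergence on compact subsets of $V$.

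Next, for each real $(a,b)\in V$ with $\det \alpha_F(a)\neq 0$, I would apply Lemma \ref{lem:c-int} verbatim with $(\tilde a, \tilde b)$ replaced by $(a, b)$: build an orthonormal basis $u(a)$ adapted to the columns of $a$ indexed by $F$, form the associated path $\gamma^\lambda_{(a)}$, and conclude that $\varphi_F(a,b)$ equals the same integral expression but taken along $\gamma^\lambda_{(a)}$ instead of $\gamma^\lambda$. The core of the argument is then to show that this substitution changes nothing, i.e.\ that $\int_{\gamma^\lambda_{(a)}}(\cdots) = \int_{\gamma^\lambda}(\cdots)$. I would prove this by Cauchy's theorem: choosing $u(a)$ to depend continuously on $a$ near $\tilde a$ produces a smooth homotopy $\gamma^\lambda_s\ (s\in[0,1])$ between the two paths, and what needs to be verified is that the integrand extends holomorphically across the 2-chain swept out and that the ``caps at infinity'' contribute nothing. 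The latter is immediate from the Gaussian decay; the former reduces to checking that $f_j(a,b,\gamma^\lambda_s(t))$ stays off the branch cut for all $s$ and $t$, which by shrinking $V$ can be arranged since the deviation of the transformed coefficient matrix from the identity in $w$-coordinates becomes uniformly small.

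Once $\varphi_F=\Psi_F$ is established on the real slice of $V$, the identity theorem for real analytic (equivalently, holomorphic) functions propagates it to all of $V$, giving the proposition on a connected open neighborhood of $(\tilde a, \tilde b)$ in $U_F$. The hard part will be the homotopy step: one needs uniform-in-$(s,t)$ control of $\Im f_j(a,b,\gamma^\lambda_s(t))$ to keep away from the branch cut, and this has to be combined with the nonuniqueness of the orthonormal complement in the construction of $u(a)$ by selecting a continuous section near $u(\tilde a)$. These are the technical points around which the proof turns; once they are handled, the rest of the argument is a direct application of Lemma \ref{lem:c-int} and the Cauchy integral theorem.
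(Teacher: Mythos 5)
Your proposal takes essentially the same route as the paper: at each nearby point $(a,b)$ with $\det\alpha_F(a)\neq 0$ you rerun the argument of Lemma \ref{lem:c-int} to get a representation over an adapted contour, then deform that contour to the fixed $\gamma_\lambda$ by a Cauchy homotopy driven by a path in the complex general linear group connecting the two transition matrices. Your write-up is more careful about the points the paper leaves implicit --- holomorphicity of the right-hand side, branch-cut avoidance along the homotopy, and the identity-theorem extension off the real slice --- but the underlying mechanism is the same.
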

\begin{proof}
Since $\det(\alpha_F(a))\neq 0$, 
by arguments similar to those in the proof of Lemma \ref{lem:c-int}, 
$\varphi_F(a, b)$ can be written as 
\[
\sum_{\lambda \in \{\pm 1\}^{|F|}} \int_{\gamma'_\lambda}
\frac{1}{(2\pi)^{d/2}} \exp(-\frac{1}{2}\sum_{i=1}^dz_i^2)
\prod_{j\in F} \left(\frac{\log(f_j(a, b, z))}{2\pi\sqrt{-1}}\right) dz.
\]
The matrix $u'$ and the integral path $\gamma'_\lambda$
can be constructed similarly.

We need to show that the above integral is equal to \eqref{varphi_F}.
There is a smooth path $u(t)\, (t\in[0, 1])$ in the general linear group of degree $d$ over $\mathbf C$
such that $u(0)=u$ and $u(1)=u'$. 
The homotopy between $\gamma_\lambda$ and $\gamma'_\lambda$ is
given by 
\[
\gamma_{\lambda j}(s)(t)
= \sum_{k=1}^d u^{jk}(s) \left(\lambda_kt_k + \lambda_k\sqrt{-1}\varepsilon(\lambda_kt_k)\right) \quad (t\in \mathbf R^d).
\]
Consequently, the value of the integral on the right-hand side of \eqref{varphi_F} does not change
when we change the integral path with $\gamma'_\lambda$.
\end{proof}
\begin{corollary}
The function $\varphi(a, b)$ is a real analytic function, 
and it has an analytic continuation along 
every path in $\bigcap_{F\in{\cal F}} U_F$.
\end{corollary}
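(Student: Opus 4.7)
The plan is to reduce the corollary to what Proposition \ref{prop:c-int} already establishes for each $\varphi_F$, and then show that these pieces glue into a real analytic function on a neighborhood of $(\tilde a,\tilde b)$ that extends analytically along paths in $\bigcap_{F\in\mathcal F} U_F$.

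First I would invoke Theorem \ref{ptb:8} to rewrite $\varphi(a,b)=\sum_{F\in\mathcal F}\varphi_F(a,b)$ on a neighborhood $U$ of $(\tilde a,\tilde b)$. Since a finite sum of real analytic functions is real analytic, and analytic continuations of the summands combine to give an analytic continuation of the sum, it suffices to prove the corollary for each $\varphi_F$ separately, with $U_F$ in place of $\bigcap_{F'}U_{F'}$. (The trivial summand $\varphi_\emptyset$ is just the constant Gaussian integral, so it poses no problem.)

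For fixed $F\in\mathcal F$, Proposition \ref{prop:c-int} expresses $\varphi_F(a,b)$ as a finite sum of complex contour integrals of the form \eqref{varphi_F} on a connected open neighborhood $V\subset U_F$ of $(\tilde a,\tilde b)$. The next step is to verify that each such contour integral is holomorphic in $(a,b)\in V$. This needs three ingredients: (i) the integrand is a holomorphic function of $(a,b)$ pointwise in $z\in\gamma_\lambda(\mathbf R^d)$, which follows from the construction of $\gamma_\lambda$, because the shift by $\sqrt{-1}\varepsilon$ keeps each $f_j(a,b,z)$ away from the branch cut $\{\Re(\cdot)\leq 0\}$ of $\log$ for $(a,b)$ in a small neighborhood of a given point of $U_F$ (shrinking $V$ if necessary and adjusting the function $\varepsilon$); (ii) uniform integrability, which follows from the Gaussian factor, since $\Re(z_i^2)$ grows quadratically in the real parameter $t$ while the imaginary perturbation $\varepsilon$ is bounded, so $|\exp(-\tfrac12\sum z_i^2)|$ decays like $\exp(-\tfrac12\|t\|^2+O(1))$ and the polynomial-in-$\log$ growth from the remaining factors is dominated; (iii) the usual differentiation-under-the-integral-sign argument using Morera's theorem or the holomorphy of integrals with locally uniformly convergent integrands. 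Since $\varphi_F$ agrees with a holomorphic function on $V$ and is real valued on the real locus, it is real analytic there; summing over $F$ gives that $\varphi$ is real analytic on $\bigcap_F U_F \cap U$.

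For analytic continuation along a path $p:[0,1]\to\bigcap_F U_F$ starting at $(\tilde a,\tilde b)$, I would use the homotopy argument already appearing in the proof of Proposition \ref{prop:c-int}: for each point $p(s)$, the condition $\det\alpha_F(p(s))\neq 0$ allows the construction of a basis $u(s)$ and a corresponding path $\gamma_\lambda^{(s)}$ depending smoothly on $s$, along which the integrand remains holomorphic in a neighborhood of $p(s)$. Covering $p([0,1])$ by finitely many such neighborhoods and applying the homotopy between the contours on overlaps shows that the local holomorphic extensions agree on overlaps, yielding a single-valued analytic continuation of each $\varphi_F$ along $p$. Summing over $F$ gives the required continuation of $\varphi$. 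The main technical obstacle is ensuring that, as $(a,b)$ varies, the contours $\gamma_\lambda$ can be chosen to simultaneously avoid all the branch cuts of the factors $\log(f_j(a,b,z))$ while maintaining the Gaussian decay; this is exactly what the nondegeneracy condition $\det\alpha_F(a)\neq 0$ on $U_F$ buys us, through the construction of the linearly independent vectors $u_1,\dots,u_s$ used to set up $\gamma_\lambda$.
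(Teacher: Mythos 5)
Your proposal is correct and follows the same route the paper intends: the corollary is a direct consequence of the decomposition \eqref{eq:varphi_decomp} together with the contour-integral representation \eqref{varphi_F} of each $\varphi_F$ from Proposition \ref{prop:c-int}, whose holomorphy and path-independence (via the Gaussian decay and the homotopy of the contours $\gamma_\lambda$ on $U_F$) give real analyticity and continuation along any path in $\bigcap_{F\in\mathcal F}U_F$. You have merely spelled out the details the paper leaves implicit, so there is nothing further to add.
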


\section{Holonomic modules}
\label{sec4}
In this section, 
we explicitly give a system of differential equations for the function 
\begin{equation}\label{fn:chiP}
\chi_P (a, b, x)
= \sum_{F\in \mathcal F} \chi_F(a, b, x), 
\end{equation}
and show that the system is holonomic.
Note that the function $\chi_F$ is defined by \eqref{chiF}.
Since the equation
\[
\varphi(a, b) =
\int_{\mathbf R^d} \frac{1}{(2\pi)^{d/2}}
\exp(-\frac{1}{2}\sum_{i=1}^dx_i^2) \chi_P(a, b, x) dx
\]
holds on a neighborhood of $(\tilde a, \tilde b)$,
a holonomic system for $\varphi(a, b)$ can be given 
as the integration module of a holonomic module
associated with 
\[
\exp(-\frac{1}{2}\sum_{i=1}^dx_i^2) \chi_P(a, b, x).
\]
For more about the integration module, 
see \cite{bjork} and \cite{Oaku1997}.

For $J\subset [n]$, we define a hyperfunction $\chi^J$ by 
\[
\chi^J = \pd{b}^J\chi_P \quad (\pd{b}^J := \prod_{i\in J}\pd{b_i} ).
\]
Note that $\chi^\emptyset=\chi_P$.
\begin{lemma}\label{lem:2}
If $J \subset [n]$ is not an element of ${\cal F}$, 
then we have $\pd{b}^J\chi_F = 0$ for $F\in \mathcal F$.
Consequently, we have $\chi^J = 0$.
\end{lemma}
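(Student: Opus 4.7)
The proof should be essentially a bookkeeping argument based on which variables $b_j$ actually appear in each $\chi_F$.

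First I would observe that, for the single factor $H(f_j(a,b,x)) - 1$, the argument $f_j(a,b,x) = \sum_i a_{ij} x_i + b_j$ depends on the $b$-variables only through $b_j$. Hence $\partial_{b_i}(H(f_j)-1) = 0$ whenever $i \neq j$. Applying the Leibniz rule to $\chi_F = \prod_{j \in F}(H(f_j)-1)$, I conclude that $\chi_F$ as a hyperfunction depends on the $b$-variables only through $\{b_j : j \in F\}$, and therefore $\partial_{b_i} \chi_F = 0$ whenever $i \notin F$.

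Next I would iterate this: for any multi-index $J \subset [n]$, the operator $\partial_b^J = \prod_{i \in J} \partial_{b_i}$ annihilates $\chi_F$ as soon as some $i \in J$ fails to lie in $F$. Thus $\partial_b^J \chi_F = 0$ unless $J \subset F$. Now comes the combinatorial input: $\mathcal F$ is an abstract simplicial complex, so it is closed under taking subsets. If $J \notin \mathcal F$, then $J$ cannot be a subset of any $F \in \mathcal F$, for otherwise $J$ itself would belong to $\mathcal F$. Combining these two facts yields $\partial_b^J \chi_F = 0$ for every $F \in \mathcal F$ whenever $J \notin \mathcal F$.

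Finally, from the decomposition $\chi_P = \sum_{F \in \mathcal F} \chi_F$ in \eqref{fn:chiP} and the linearity of $\partial_b^J$, I would conclude $\chi^J = \partial_b^J \chi_P = \sum_{F \in \mathcal F} \partial_b^J \chi_F = 0$.

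The whole argument is elementary; the only point that requires care is the interpretation of the derivative of a Heaviside function as a hyperfunction (the product rule still holds distributionally, and the vanishing $\partial_{b_i}(H(f_j)-1) = 0$ for $i \neq j$ is an identity of hyperfunctions, not merely of smooth functions). So there is no serious obstacle, and the main ``content'' of the lemma is simply the closure-under-subsets property of the simplicial complex $\mathcal F$.
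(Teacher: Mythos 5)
Your proof is correct and follows essentially the same argument as the paper: you use the closure-under-subsets property of the abstract simplicial complex $\mathcal F$ to guarantee that $J\not\subset F$ for every $F\in\mathcal F$, and then observe that differentiating $\chi_F$ with respect to some $b_k$ with $k\in J\setminus F$ gives zero because $\chi_F$ does not depend on $b_k$. The paper's proof is a more compact statement of exactly this reasoning.
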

\begin{proof}
Since $\mathcal F$ is an abstract simplicial complex, 
we have $J\not\subset F$.
Take $k\in J\backslash F$, then we have $\pd{b}^J\chi_F=0$,
since $\pd{b_k} \prod_{j\in F} \left(H(f_j(a, b, x))-1\right) = 0$.
\end{proof}
We now provide a system of differential equations for the $\chi^J$'s.
Let $g=(g^J)_{J\in \mathcal F}$ be a vector whose elements are functions indexed by the set $\mathcal F$.
Let us consider the system 
defined by the following:
\begin{align}
\label{eq:chi0} \pd{x_i}g^J&=\sum_{j=1}^n a_{ij}\pd{b_j}g^J 
&&\hspace{-36pt}(1\leq i\leq d, \, J\in \mathcal F), \\
\label{eq:chi1} \pd{a_{ij}}g^J &=x_i\pd{b_j} g^J 
&&\hspace{-36pt}(1\leq i\leq d, \, 1\leq j\leq n, \, J\in \mathcal F), \\
\label{eq:chi2} \pd{b_j} g^J &=g^{J\cup \{j\}} 
&&\hspace{-36pt}(j\in J^c, \, J\in \mathcal F), \\
\label{eq:chi3} f_jg^J&=0 
&&\hspace{-36pt}(j\in  J, \, J\in {\cal F}), 
\end{align}
where $g^{J\cup \{j\}}=0$ for $J\cup \{j\} \notin {\cal F}$.

\begin{lemma}\label{lem:1}
Let $g=(\pd{b}^J\chi_F)_{J\in \mathcal F}$ for $F\in \mathcal F$.
Then the function $g$ satisfies equations
\eqref{eq:chi0}, \eqref{eq:chi1}, \eqref{eq:chi2}, and \eqref{eq:chi3}.
\end{lemma}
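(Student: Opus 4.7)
The plan is to verify each of the four equations by first establishing the corresponding identity at the level of the single factors $H(f_k)-1$ making up $\chi_F=\prod_{k\in F}(H(f_k)-1)$, and then applying the Leibniz rule and commuting $\pd{b}^J$. In the hyperfunction sense, the chain rule gives
\[
\pd{x_i}H(f_k)=a_{ik}\delta(f_k),\qquad \pd{a_{ij}}H(f_k)=\delta_{jk}x_i\delta(f_k),\qquad \pd{b_j}H(f_k)=\delta_{jk}\delta(f_k),
\]
coming from $\pd{x_i}f_k=a_{ik}$, $\pd{a_{ij}}f_k=\delta_{jk}x_i$ and $\pd{b_j}f_k=\delta_{jk}$ (here $\delta_{jk}$ is Kronecker's delta and $\delta(f_k)$ is the Dirac delta hyperfunction). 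Comparing these identities yields $\pd{x_i}H(f_k)=a_{ik}\pd{b_k}H(f_k)$ and $\pd{a_{ij}}H(f_k)=x_i\pd{b_j}H(f_k)$. Since each factor of $\chi_F$ depends only on a single $f_k$, the Leibniz rule lifts these to
\[
\pd{x_i}\chi_F=\sum_{k\in F}a_{ik}\pd{b_k}\chi_F=\sum_{k=1}^n a_{ik}\pd{b_k}\chi_F,\qquad \pd{a_{ij}}\chi_F=x_i\pd{b_j}\chi_F,
\]
the extension of the first sum from $F$ to $[n]$ being justified by $\pd{b_k}\chi_F=0$ whenever $k\notin F$.

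For (\ref{eq:chi0}) and (\ref{eq:chi1}), I then apply $\pd{b}^J$ to both sides of these two identities. Since $\pd{b}^J$ commutes with $\pd{x_i}$, with $\pd{a_{ij}}$, and with multiplication by $a_{ij}$ or $x_i$ (all independent of $b$), the resulting equations are exactly (\ref{eq:chi0}) and (\ref{eq:chi1}) for $g^J=\pd{b}^J\chi_F$. Equation (\ref{eq:chi2}) is immediate: for $j\notin J$, $\pd{b_j}g^J=\pd{b}^{J\cup\{j\}}\chi_F$, which equals $g^{J\cup\{j\}}$ when $J\cup\{j\}\in\mathcal F$ and vanishes by Lemma \ref{lem:2} otherwise, matching the convention stated with the system.

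Equation (\ref{eq:chi3}) is the only place requiring a slight extra observation. When $J\not\subset F$, Lemma \ref{lem:2} gives $\pd{b}^J\chi_F=0$ directly. When $J\subset F$, repeated application of the Leibniz rule produces
\[
\pd{b}^J\chi_F=\Bigl(\prod_{k\in F\setminus J}(H(f_k)-1)\Bigr)\prod_{k\in J}\delta(f_k),
\]
and for $j\in J$ multiplying by $f_j$ yields the factor $f_j\delta(f_j)=0$. The main (and really only) point needing care is the meaning of the product $\prod_{k\in J}\delta(f_k)$: this is well-defined as a hyperfunction precisely because general position makes the gradients $\nabla_x f_k$ linearly independent for $k\in J\subset F$, so the standard transversality condition holds. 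Everything else is routine chain-rule and Leibniz bookkeeping, and I do not expect a substantive obstacle.
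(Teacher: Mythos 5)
Your proposal is correct and follows essentially the same route as the paper's proof: chain rule for hyperfunctions and the Leibniz rule to get the identities for $\chi_F$, Lemma \ref{lem:2} for \eqref{eq:chi2}, and the explicit formula $\pd{b}^J\chi_F=\prod_{k\in J}\delta(f_k)\prod_{k\in F\setminus J}(H(f_k)-1)$ together with $f_j\delta(f_j)=0$ for \eqref{eq:chi3}. Your added remarks (commuting $\pd{b}^J$ through the first two identities, and the transversality justifying the product of deltas) are fine elaborations of steps the paper leaves implicit.
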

\begin{proof}
When $F=\emptyset$, it is obvious that equations 
\eqref{eq:chi0}, \eqref{eq:chi1}, \eqref{eq:chi2}, and \eqref{eq:chi3}
hold, since $\chi_\emptyset = 1$.
Suppose $F$ is not the empty set.

We first check equation \eqref{eq:chi0}.
Since  
$
\pd{b_j}\chi_F(a, b, x) =0
$
for $j\notin F$, 
we have 
\[
\pd{x_i}\chi_F(a, b, x)
= \sum_{j\in F} a_{ij}\pd{b_j}\chi_F(a, b, x)
= \sum_{j=1}^n a_{ij}\pd{b_j}\chi_F(a, b, x).
\]
Here, we apply the chain rule for hyperfunctions.
The equation above implies \eqref{eq:chi0}.

We now show equation \eqref{eq:chi1}.
When $j\notin F$, both sides of \eqref{eq:chi1} are equal to $0$.
When $j\in F$, we have $\pd{a_{ij}}g^J=x_i\pd{b_j} g^J$
by the chain rule.

Equation \eqref{eq:chi2} holds by Lemma \ref{lem:2}.
For $J\subset F$, we have
\[
g^J=
\prod_{k\in J}\delta(f_k(a, b, x))
\prod_{k\in F\backslash J} \left(H(f_k(a, b, x))-1\right).
\]
This implies \eqref{eq:chi3}.
\end{proof}

By \eqref{fn:chiP}, we have 
$
\chi^J = \sum_{F\in \mathcal F} \pd{b}^J\chi_F.
$
By this equation and Lemma \ref{lem:1}, we have the following proposition.
\begin{proposition}\label{ann_chiP}
The vector-valued function $g=(\chi^J)_{J\in \mathcal F}$ satisfies 
equations \eqref{eq:chi0}, \eqref{eq:chi1}, \eqref{eq:chi2}, 
and \eqref{eq:chi3}.
\end{proposition}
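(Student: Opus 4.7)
The plan is to derive the proposition directly from Lemma \ref{lem:1} by $\mathbf C$-linearity of the four systems \eqref{eq:chi0}--\eqref{eq:chi3} in the vector $g$. From the definition \eqref{fn:chiP} and the fact that partial derivatives commute with finite sums of hyperfunctions, I would first write
\[
\chi^J \;=\; \partial_b^J \chi_P \;=\; \sum_{F\in\mathcal F} \partial_b^J \chi_F,
\]
so that the candidate vector $(\chi^J)_{J\in\mathcal F}$ is literally the sum, over $F\in\mathcal F$, of the vectors $g_F := (\partial_b^J \chi_F)_{J\in\mathcal F}$ considered in Lemma \ref{lem:1}.

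Next, I would check that each of the four defining equations is linear in the components of $g$. Equations \eqref{eq:chi0} and \eqref{eq:chi1} express $\partial_{x_i} g^J$ and $\partial_{a_{ij}} g^J$ as linear combinations of the derivatives $\partial_{b_j} g^J$ with coefficients in $\mathbf C[a,b,x]$. Equation \eqref{eq:chi2} is a pointwise linear relation, $\partial_{b_j}g^J = g^{J\cup\{j\}}$. Equation \eqref{eq:chi3} is multiplication by $f_j$. In all four cases, if each $g_F$ satisfies the equation, so does $\sum_F g_F$. Applying Lemma \ref{lem:1} summand by summand then yields the claim for $(\chi^J)_{J\in\mathcal F}$.

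The one subtlety that needs comment is the convention attached to \eqref{eq:chi2}: when $J\cup\{j\} \notin \mathcal F$, the right-hand side is taken to be $0$. For this to be compatible with the summation argument, I need $\partial_{b_j}(\partial_b^J \chi_F)=0$ for every $F\in\mathcal F$ whenever $J\cup\{j\}\notin\mathcal F$; but this is exactly what Lemma \ref{lem:2} provides, since the convention is already built into the statement of Lemma \ref{lem:1}. Since there is no genuine analytic content beyond Lemmas \ref{lem:1} and \ref{lem:2}, I do not anticipate a main obstacle here; the ``proof'' is essentially the observation that all four systems are linear and closed under taking the finite sum $\sum_{F\in\mathcal F}$.
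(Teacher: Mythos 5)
Your proposal is correct and matches the paper's own argument, which simply combines the decomposition $\chi^J=\sum_{F\in\mathcal F}\partial_b^J\chi_F$ from \eqref{fn:chiP} with Lemma \ref{lem:1}; your explicit linearity check and the remark on the $g^{J\cup\{j\}}=0$ convention (via Lemma \ref{lem:2}) just spell out what the paper leaves implicit.
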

Next, we show that the system defined by 
\eqref{eq:chi0}, \eqref{eq:chi1}, \eqref{eq:chi2}, and \eqref{eq:chi3} 
for $\chi_P$ is a holonomic system.
In the remainder of this paper, we will frequently use the following rings:
\begin{align*}
D_{abx} &:=\mathbf C\langle a_{ij}, b_j, x_i, \pd{a_{ij}}, \pd{b_j}, \pd{x_i}: 1\leq i\leq d, \, 1\leq j\leq n \rangle, \\
D_{ab}  &:=\mathbf C\langle a_{ij}, b_j,    \pd{a_{ij}}, \pd{b_j}: 1\leq i\leq d, \, 1\leq j\leq n \rangle, \\
\mathbf C[a, b, x, \xi_a, \xi_b, \xi_x]&:= \mathbf C[a_{ij}, b_j, x_i, \xi_{a_{ij}}, \xi_{b_j}, \xi_{x_i}:\, 1\leq i\leq d, \, 1\leq j\leq n].\\
\end{align*}
We also use the free modules $(D_{abx})^{|\mathcal F|}, (D_{ab})^{|\mathcal F|}$, and $\mathbf C[a, b, x]^{|\mathcal F|}$, 
whose basis is $\{g^J:J\in \mathcal F\}$.
\begin{proposition}\label{prop:chiP}
Let $N_\chi$ be the sub left $D_{abx}$-module of $(D_{abx})^{|\mathcal F|}$ generated by 
\begin{align}
\label{op:chi0} \left(\pd{x_i}-\sum_{j=1}^n a_{ij}\pd{b_j}\right)g^J
&\quad (1\leq i\leq d, \, J\in \mathcal F), \\
\label{op:chi1} \left(\pd{a_{ij}}-x_i\pd{b_j}\right) g^J
&\quad (1\leq i\leq d, \, 1\leq j\leq n, \, J\in \mathcal F), \\
\label{op:chi2} \pd{b_j} g^J - g^{J\cup \{j\}}
&\quad (j\in J^c, \, J\in \mathcal F), \\
\label{op:chi3}  f_jg^J, \, &\quad (j\in J, J\in\mathcal F).
\end{align}
Then, the quotient module $M_\chi=(D_{abx})^{|\mathcal F|}/N_\chi$ is holonomic.
\end{proposition}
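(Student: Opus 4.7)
The plan is to bound the dimension of the characteristic variety $\ch{M_\chi}$ and invoke Bernstein's inequality. The ambient affine space has $dn + n + d$ coordinates $(a_{ij}, b_j, x_i)$, so by Bernstein we automatically have $\dim \ch{M_\chi} \geq dn + n + d$; it therefore suffices to produce the matching upper bound. The standard strategy is: with the order filtration on $D_{abx}$ and each basis vector $g^J$ placed in degree $0$, compute the principal symbols of the four families of generators \eqref{op:chi0}--\eqref{op:chi3} of $N_\chi$, and bound $\ch{M_\chi}$ above by the zero locus in $(\mathbf C[a,b,x,\xi_a,\xi_b,\xi_x])^{|\mathcal F|}$ of the module they generate.

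The principal symbols are easy to read off component-by-component. From \eqref{op:chi0} and \eqref{op:chi1} one gets, in every $g^J$-component, the linear forms $\xi_{x_i} - \sum_j a_{ij}\xi_{b_j}$ and $\xi_{a_{ij}} - x_i \xi_{b_j}$, which express all the $\xi_{x_i}$ and $\xi_{a_{ij}}$ in terms of the $\xi_{b_j}$. The slightly delicate point — the main bookkeeping issue in the proof — is \eqref{op:chi2}: the generator $\pd{b_j} g^J - g^{J \cup \{j\}}$ has order $1$, while the basis term $g^{J \cup \{j\}}$ has order $0$, so its principal symbol is simply $\xi_{b_j} g^J$, whether or not $J \cup \{j\} \in \mathcal F$. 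From \eqref{op:chi3} the order-$0$ relations $f_j g^J$ for $j \in J$ remain as they are. Consequently, in the $g^J$-component the initial module contains $\xi_{x_i} - \sum_j a_{ij}\xi_{b_j}$, $\xi_{a_{ij}} - x_i \xi_{b_j}$, the monomials $\xi_{b_j}$ for $j \notin J$, and the linear forms $f_j(a,b,x)$ for $j \in J$.

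The resulting upper bound on the characteristic variety is the union $\bigcup_{J \in \mathcal F} V_J$, where $V_J$ is cut out by the equations just listed. I would then count dimensions of each $V_J$ separately. The forms $f_j = \sum_i a_{ij}x_i + b_j$ for $j \in J$ are linearly independent (each carries its own $b_j$ with coefficient $1$), so they cut out a subvariety of the $(a,b,x)$-space of dimension $dn + n + d - |J|$; on the cotangent side, the conditions leave $\xi_{b_j}$ free for $j \in J$ and determine the remaining $\xi$-coordinates linearly. This gives $\dim V_J = (dn + n + d - |J|) + |J| = dn + n + d$ for every $J \in \mathcal F$, and hence $\dim \ch{M_\chi} \leq dn + n + d$. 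Combined with Bernstein's inequality, $M_\chi$ is holonomic.
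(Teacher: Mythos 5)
Your proposal is correct and follows essentially the same route as the paper: take principal symbols of the generators (noting that \eqref{op:chi2} contributes $\xi_{b_j}g^J$ for $j\in J^c$), bound $\ch{M_\chi}$ by the union of the varieties $V_J$ cut out by $\xi_{x_i}-\sum_j a_{ij}\xi_{b_j}$, $\xi_{a_{ij}}-x_i\xi_{b_j}$, $f_j\,(j\in J)$, $\xi_{b_k}\,(k\in J^c)$, and show each $V_J$ has dimension $nd+n+d$. The only cosmetic difference is that you count dimension by solving for $b_j$, $\xi_{x_i}$, $\xi_{a_{ij}}$ explicitly, whereas the paper computes the rank of the Jacobian of the defining equations; both give the same bound.
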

\begin{proof}
The principal symbols of 
\eqref{op:chi0}, \eqref{op:chi1}, \eqref{op:chi2}, and \eqref{op:chi3}
are the following:
\begin{align}
\label{char0} (\xi_{x_i}-\sum_{j=1}^n a_{ij}\xi_{b_j})g^J&\quad  (1\leq i\leq d, \, J\in \mathcal F), \\
\label{char1} (\xi_{a_{ij}}-x_i\xi_{b_j})g^J  &\quad (1\leq i\leq d, \, 1\leq j\leq n, \, J\in \mathcal F), \\
\label{char2} f_jg^J, \, \xi_{b_k} g^J &\quad (j\in  J, \, k\in J^c, \, J\in \mathcal F).
\end{align}
For $J\in \mathcal F$, let $V_J$ be an algebraic variety defined by
\begin{align}
\label{V_J0} \xi_{x_i}-\sum_{j=1}^n a_{ij}\xi_{b_j}, \, \xi_{a_{ij}} - x_i\xi_{b_j} &
 \quad (1\leq i\leq d, \, 1\leq j\leq n), \\
\label{V_J1} f_j, \, \xi_{b_k} &
 \quad (j\in  J, \, k\in J^c).
\end{align}
By  \cite[Proposition 1]{oaku1994}, 
the union $\bigcup_{J\in \mathcal F} V_J$ includes $\ch{M}$.
Since the rank of the Jacobian matrix of \eqref{V_J0} and \eqref{V_J1} 
is $nd+n+d$,  
the Krull dimension of $V_J$ is equal to $nd+n+d$.
Hence, the dimension of $\ch{M}$ is not greater than $nd+n+d$.
\end{proof}

Let $N_q$ be a $D_{abx}$-submodule of $(D_{abx})^{|\mathcal F|}$ generated by 
\eqref{op:chi1}, \eqref{op:chi2}, \eqref{op:chi3}, and
\begin{equation}\label{op:q0}
\left(x_i+\pd{x_i}-\sum_{j=1}^n a_{ij}\pd{b_j}\right)g^J \quad (1\leq i\leq d, \, J\in \mathcal F).
\end{equation}
Proposition \ref{prop:chiP} implies 
that the quotient module $M_q=(D_{abx})^{|\mathcal F|)}/N_q$
is a holonomic module.
Moreover, the function
\[
q^J(a, b, x) = \exp(-\frac{1}{2}\sum_{i=1}^dx_i^2)\chi^J  \quad (J\in \mathcal F)
\]
is a solution of $N_q$ (see \cite{ost}).
By Lemma \ref{lem:1}, the function 
\[
\exp(-\frac{1}{2}\sum_{i=1}^dx_i^2)\pd{b}^J\chi_F  \quad (J\in \mathcal F)
\]
is also a solution of $N_q$ for $F\in \mathcal F$.

Calculating the integration module of $N_q$ with respect to $x$, 
we have the following theorem.
\begin{theorem}\label{th:hol}
Let $N$ be the sub left $D_{ab}$-module of $(D_{ab})^{|\mathcal F|}$ generated by 
\begin{align}
\label{op:p1} \left(\pd{a_{ij}}-\sum_{k=1}^n a_{ik}\pd{b_k}\pd{b_j}\right) g^J 
&\quad (1\leq i\leq d, \, 1\leq j\leq n, \, J\in \mathcal F), \\
\label{op:p2}  \pd{b_j} g^J - g^{J\cup \{j\}} &\quad (j\in J^c, \, J\in \mathcal F), \\
\label{op:p3}  (b_j+\sum_{k=1}^n\sum_{i=1}^da_{ij}a_{ik}\pd{b_k})g^J &\quad (j\in J, \, J\in {\cal F}).
\end{align}
Then the quotient module $M=(D_{ab})^{|\mathcal F|}/N$ is isomorphic to 
the integration module $M_q/\sum_{i=1}\pd{x_i}M_q$.
Consequently, $M$ is a holonomic module.
\end{theorem}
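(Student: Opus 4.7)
The plan is to construct an explicit $D_{ab}$-module isomorphism
\[
\pi\colon M \xrightarrow{\sim} M_q/\sum_{i=1}^d \pd{x_i} M_q
\]
by identifying the generator $g^J$ on both sides, and then to invoke the standard fact that integration of a holonomic module is holonomic (see \cite{bjork}) to deduce holonomicity of $M$. The central observation driving the construction is that, modulo $\sum_i \pd{x_i} M_q$, the relation \eqref{op:q0} yields the identity
\[
x_i g^J \equiv \sum_{j=1}^n a_{ij}\pd{b_j} g^J \qquad (1\leq i\leq d,\ J\in\mathcal F),
\]
which allows one systematically to eliminate each $x_i$ in favour of a $D_{ab}$-operator acting on $g^J$.

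First I would define $\pi$ by sending the generator $g^J\in M$ to the class $[g^J]$ in the integration module. For well-definedness, each of \eqref{op:p1}, \eqref{op:p2}, \eqref{op:p3} must vanish in the quotient. Relation \eqref{op:p2} is literally \eqref{op:chi2}. For \eqref{op:p1}, I would apply $\pd{b_j}$ to the key identity and combine with \eqref{op:chi1}:
\[
\pd{a_{ij}} g^J = x_i\pd{b_j} g^J = \pd{b_j}(x_i g^J)\equiv \sum_k a_{ik}\pd{b_k}\pd{b_j} g^J.
\]
For \eqref{op:p3}, I would expand $f_j g^J=0$ from \eqref{op:chi3} and substitute the key identity, obtaining $b_j g^J \equiv -\sum_{i,k} a_{ij}a_{ik}\pd{b_k}g^J$. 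Surjectivity of $\pi$ then follows by reducing any representative $\sum_J P_J g^J$ of a class in $M_q/\sum_i \pd{x_i} M_q$ to a $D_{ab}$-combination of the $g^J$'s: push each $\pd{x_i}$ to the left and annihilate it (collecting correction terms from $[\pd{x_i},x_i]=1$), rewrite each $\pd{a_{ij}}$ using \eqref{op:chi1}, and iteratively replace powers of $x_i$ acting on a generator via the key identity, using that $x_i$ commutes with every $a_{kl}$ and $\pd{b_l}$.

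The main obstacle is injectivity of $\pi$. To handle this I would construct an explicit inverse $\iota\colon M_q/\sum_i \pd{x_i}M_q \to M$ by formalising the reduction above into a well-defined $D_{ab}$-linear map $\tilde\iota\colon M_q \to M$ that vanishes on $\sum_i \pd{x_i}M_q$, and then descend to $\iota$. The hard step is to show that $\tilde\iota$ is well-defined: one must verify that the rewriting rules $\pd{x_i}\mapsto 0$, $x_i\mapsto \sum_k a_{ik}\pd{b_k}$, and $\pd{a_{ij}}\mapsto \sum_k a_{ik}\pd{b_k}\pd{b_j}$ are confluent on $(D_{abx})^{|\mathcal F|}$ and that they carry each generator of $N_q$ into $N$. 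This amounts to a Gr\"obner-basis-style check of the critical pairs in the Weyl algebra; alternatively, one may invoke Oaku's algorithm for integration of holonomic $D$-modules \cite{Oaku1997}, whose output applied to $N_q$ yields precisely the generators \eqref{op:p1}, \eqref{op:p2}, and \eqref{op:p3} of $N$, establishing the isomorphism and the holonomicity of $M$ in one stroke.
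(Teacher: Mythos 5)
Your outline mirrors the paper's strategy at a high level (identify $g^J\mapsto[g^J]$, verify the three relations in the quotient, then handle injectivity), and the "easy" direction — that $N$ lands in the kernel of the canonical map $\iota\colon (D_{ab})^{|\mathcal F|}\to M_q/\sum_i\pd{x_i}M_q$ — is handled correctly: your manipulations with $x_i g^J\equiv\sum_k a_{ik}\pd{b_k}g^J$ are exactly the content of the paper's identities \eqref{eq1} and \eqref{eq2} built from $P_i=x_i+\pd{x_i}-\sum_k a_{ik}\pd{b_k}$.

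The gap is in the hard direction, $\ker\iota\subset N$, which you correctly flag as the obstacle but then do not actually close. Saying "one must verify that the rewriting rules are confluent" or "this amounts to a Gr\"obner-basis-style check of the critical pairs" names the difficulty without resolving it, and "invoke Oaku's algorithm, whose output yields precisely \eqref{op:p1}--\eqref{op:p3}" is circular: that the algorithm's output is generated by exactly those three families is the assertion to be proved, not a known fact one can cite. Moreover the obvious reduction map (kill $\pd{x_i}$, replace $x_i$ by $\sum_k a_{ik}\pd{b_k}$, replace $\pd{a_{ij}}$ by $\sum_k a_{ik}\pd{b_k}\pd{b_j}$) is not well-defined on $(D_{abx})^{|\mathcal F|}$ as a $D_{ab}$-linear map without first choosing a normal form, and showing the choice does not matter is the whole point.

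What the paper does instead, and what your argument needs, is a concrete module-decomposition argument. Starting from the observation that $\ker\iota$ is contained in the $D_{ab}$-module $N_q+\sum_i\pd{x_i}(D_{abx})^{|\mathcal F|}$, the paper shows (by commuting $P_i$ past the generators $Q_\lambda$ of $N_q$ and using the relation $x_iQ_\lambda=\sum_k a_{ik}\pd{b_k}Q_\lambda+P_iQ_\lambda-\pd{x_i}Q_\lambda$) that this module equals
\[
\sum_\lambda D_{ab}Q'_\lambda \;+\;
\sum_{\alpha}\sum_{J,i} x^\alpha D_{ab}P'_i g^J \;+\;
\sum_i\pd{x_i}(D_{abx})^{|\mathcal F|},
\]
where the $Q'_\lambda$ are the generators \eqref{op:p1}--\eqref{op:p3} of $N$ and $P'_i=x_i-\sum_k a_{ik}\pd{b_k}$. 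An element $P\in\ker\iota\subset(D_{ab})^{|\mathcal F|}$ then decomposes as $P=Q+R$ with $Q\in\sum D_{ab}Q'_\lambda=N$ and $R$ in the middle summand, and a lex order on $(D_{abx})^{|\mathcal F|}$ with $x_i$ dominant shows that the Gr\"obner basis of $L=\sum_{J,i}D_{abx}P'_ig^J$ is $\{P'_ig^J\}$, whose leading terms all involve some $x_i$; since $R=P-Q$ has no $x$-variables, $R=0$ and hence $P\in N$. It is precisely this elimination step — establishing the equality of the two presentations of $N_q+\sum_i\pd{x_i}(\cdot)$ and then running the leading-term argument — that your proposal leaves undone, and it cannot be replaced by an appeal to the existence of an integration algorithm.
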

\begin{proof}
We denote by $\iota$
the canonical morphism from $(D_{ab})^{|\mathcal F|}$ to 
the integration module $M_q/\sum_{i=1}\pd{x_i}M_q$.
Let $P_i=x_i+\pd{x_i}-\sum_{k=1}^n a_{ik}\pd{b_k}$.
Since we have
\begin{align}
\label{eq1}
\left(\pd{a_{ij}}-x_i\pd{b_j}\right) g^J + \pd{b_j}P_ig^J 
&=\left(\pd{x_i}\pd{b_j}+\pd{a_{ij}}-\sum_{k=1}^n a_{ik}\pd{b_k}\pd{b_j}\right) g^J, \\
\label{eq2}
f_jg^J-\sum_{i=1}^da_{ij}P_ig^J 
&=\left(b_j-\sum_{i=1}^da_{ij}\pd{x_i}
+\sum_{k=1}^n\sum_{i=1}^da_{ij}a_{ik}\pd{b_k}\right)g^J, 
\end{align}
the $D_{abx}$-module $N_q$ is generated by \eqref{op:q0}, \eqref{op:chi2}, and
\begin{align*}
& \left(\pd{x_i}\pd{b_j}+\pd{a_{ij}}-\sum_{k=1}^na_{ik}\pd{b_k}\pd{b_j}\right)g^J
\quad (1\leq i\leq d, \, J\in \mathcal F), \\
& \left(b_j-\sum_{i=1}^da_{ij}\pd{x_i}
+\sum_{k=1}^n\sum_{i=1}^da_{ij}a_{ik}\pd{b_k}\right)g^J
\quad (j\in  J, \, J\in {\cal F}).
\end{align*}
Consequently, we have $N\subset \ker\iota$.

Next, we show the opposed inclusion $\ker\iota\subset N$.
Regarding $\left(D_{ab}\right)^{|\mathcal F|}$
as a subset of $\left(D_{abx}\right)^{|\mathcal F|}$, 
the left $D_{ab}$-module $N_q +\sum \pd{x_i} (D_{abx})^{|\mathcal F|}$ 
includes $\ker\iota$.
Since the module $N_q$ is generated by 
\eqref{op:chi1}, \eqref{op:chi2}, \eqref{op:chi3}, and \eqref{op:q0}, 
the module $N_q +\sum \pd{x_i} (D_{abx})^{|\mathcal F|}$ can be written as 
\begin{equation}\label{module1}
\sum_{\lambda\in\Lambda} D_{abx}Q_\lambda 
+\sum_{J\in\mathcal F}\sum_{i=1}^d D_{abx}P_ig^J 
+\sum_{i=1}^d  \pd{x_i} (D_{abx})^{|\mathcal F|}.
\end{equation}
Here, we denote by $Q_\lambda\, (\lambda\in\Lambda)$ 
the differential operators in 
\eqref{op:chi1}, \eqref{op:chi2}, and \eqref{op:chi3}.
The left $D_{ab}$-module \eqref{module1} is equal to 
\begin{equation}\label{module2}
\sum_{\lambda\in\Lambda} D_{ab}Q_\lambda 
+\sum_{J\in\mathcal F}\sum_{i=1}^d D_{abx}P_ig^J 
+\sum_{i=1}^d  \pd{x_i} (D_{abx})^{|\mathcal F|}.
\end{equation}
Note that the first term of \eqref{module2} is different from 
that of \eqref{module1}.
In fact, 
for any $P_i$ and the differential operator in 
\eqref{op:chi1}, \eqref{op:chi2}, and \eqref{op:chi3}, 
we have 
\begin{align*}
P_i\left(\pd{a_{k\ell}}-x_k\pd{b_\ell}\right)g^J
&=\left(\pd{a_{k\ell}}-x_k\pd{b_\ell}\right)P_ig^J, \\
P_i\left(\pd{b_k}g^J-g^{J\cup \{k\}}\right)
&= \pd{b_k}P_ig^J -P_ig^{J\cup \{k\}}, \\
P_i\left(\sum_{k=1}^da_{k\ell}x_k+b_\ell\right)g^J
&=\left(\sum_{k=1}^da_{k\ell}x_k+b_\ell\right)P_ig^J.
\end{align*}
These equations imply that 
for any differential operator $Q_\lambda$ in 
\eqref{op:chi1}, \eqref{op:chi2}, and \eqref{op:chi3}, 
the operator $P_iQ_\lambda$ is an element of 
$\sum_{i, J} D_{abx}P_ig^J$.
Consequently, module \eqref{module2} includes
$
x_iQ_\lambda = 
\sum_{k=1}^n a_{ik}\pd{b_k}Q_\lambda +P_iQ_\lambda  -\pd{x_i}Q_\lambda.
$
By induction on the multi-index $\alpha\in {\bf N}_0^d$, 
module \eqref{module2} includes $x^\alpha Q_\lambda$ for any 
$\alpha\in {\bf N}_0^d$.
Hence, module \eqref{module2} includes \eqref{module1}.
The opposite inclusion is obvious.

We denote by $Q'_\lambda\, (\lambda\in\Lambda)$ 
the differential operators in \eqref{op:p1}, \eqref{op:p2}, and \eqref{op:p3}.
By \eqref{eq1} and \eqref{eq2}, the left $D_{ab}$-module
\[
\sum_{\lambda\in\Lambda} D_{ab}Q'_\lambda 
+\sum_{J\in\mathcal F}\sum_{i=1}^d D_{abx}P_ig^J 
+\sum_{i=1}^d  \pd{x_i} (D_{abx})^{|\mathcal F|}
\]
is equal to module \eqref{module2}.
Obviously, this module is equal to the left $D_{ab}$-module
\begin{equation}\label{module3}
\sum_{\lambda\in\Lambda} D_{ab}Q'_\lambda 
+\sum_{\alpha\in \mathbf N_0^d}\sum_{J\in\mathcal F}\sum_{i=1}^d x^\alpha D_{ab}P'_ig^J 
+\sum_{i=1}^d  \pd{x_i} (D_{abx})^{|\mathcal F|},
\end{equation}
where $P'_i:= x_i-\sum_{k=1}^n a_{ik}\pd{b_k}$.
Note that the module $N_q +\sum \pd{x_i} (D_{abx})^{|\mathcal F|}$ 
is equal to \eqref{module3}.
Since $\ker\iota$ is a subset of the intersection of 
$(D_{ab})^{|\mathcal F|}$ and \eqref{module3}, 
we have
\[
\ker\iota \subset
\sum_{\lambda\in\Lambda} D_{ab}Q'_\lambda 
+\sum_{\alpha\in \mathbf N_0^d}\sum_{J\in\mathcal F}\sum_{i=1}^d x^\alpha D_{ab}P'_ig^J.
\]

Let $P\in \ker\iota$. 
Then the element $P$ can be written as $Q+R$, where 
$Q\in \sum_{\lambda\in\Lambda} D_{ab}Q'_\lambda$
and 
$R \in \sum_{\alpha\in \mathbf N_0^d}\sum_{J\in\mathcal F}\sum_{i=1}^d x^\alpha D_{ab}P'_ig^J$.
The element $P-Q=R$ is an element of 
the $D_{abx}$-module $L:=\sum_{J\in\mathcal F}\sum_{i=1}^d D_{abx}P'_ig^J$.
Let $\prec$ be a lex order which satisfies $x_i \succ m$ 
for $1\leq i\leq d$ and any monomial $m\in D_{ab}$.
Since the Gr\"obner basis for $L$ with respect to $\prec$
is given by 
$
\left\{ P'_ig^J : 1\leq i\leq d, \, J\in\mathcal F \right\}, 
$
the leading term of $P-Q$ must be divided by some 
$x_ig^J\, (1\leq i \leq d, \, J\in\mathcal F)$.
Since $P-Q\in (D_{ab})^{|\mathcal F|}$, we have $P-Q=0$.
Therefore, we have 
$
\ker\iota \subset \sum D_{ab}Q'_\lambda.
$
\end{proof}

\section{Pfaffian equation and holonomic rank}
\label{sec5}
Let $M$ be the module derived in Theorem \ref{th:hol}.
In this section, we will evaluate the holonomic rank of $M$ and derive 
a Pfaffian equation associated with $M$.
The following lemma gives a lower bound of the holonomic rank.
\begin{lemma}\label{lem:liearly-independent}
The real analytic functions $\varphi_F(a, b)$,
where $F$ runs over the abstract simplicial complex $\mathcal F$ 
associated with $P$,
are linearly independent solutions of $M$.
Consequently, the holonomic rank of $M$ is not less than 
the number of the nonempty faces of $P$.
\end{lemma}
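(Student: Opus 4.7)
The plan is to exhibit, for each $F \in \mathcal{F}$, a solution of $M$ whose components are iterated $b$-derivatives of $\varphi_F$, and then to peel off a hypothetical linear relation one face at a time by applying $\partial_b^F$ in order of decreasing $|F|$.

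I would first check that the vector $\Phi_F := (\partial_b^J \varphi_F)_{J \in \mathcal{F}}$ is a solution of $M$. By Lemma \ref{lem:1}, the vector $(\partial_b^J \chi_F)_{J \in \mathcal{F}}$ satisfies the system \eqref{eq:chi0}--\eqref{eq:chi3}, so the Gaussian-weighted vector $(e^{-|x|^2/2} \partial_b^J \chi_F)_{J \in \mathcal{F}}$ is a solution of $N_q$; integrating over $x \in \mathbf R^d$ turns this into a solution of the integration module, which is $M$ by Theorem \ref{th:hol}, and the $J$-th component of the integrated solution is exactly $\partial_b^J \varphi_F$.

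The central observation is the pattern of vanishing and nonvanishing among these derivatives. Since $\chi_F$ depends on $b_j$ only through the factor $H(f_j) - 1$ for $j \in F$, we have $\partial_b^J \varphi_F \equiv 0$ whenever $J \not\subset F$. At the ``top'' index $J = F$, differentiating under the integral using $\partial_{b_j}(H(f_j)-1) = \delta(f_j)$ yields
\[
\partial_b^F \varphi_F(a,b)
= \frac{1}{(2\pi)^{d/2}} \int_{\mathbf R^d} \exp\left(-\frac{1}{2}\sum_{i=1}^d x_i^2\right) \prod_{j \in F} \delta(f_j(a,b,x))\, dx.
\]
Under general position the columns $(a_{ij})_{i=1}^d$, $j \in F$, of $a$ are linearly independent, so, writing $A_F$ for the corresponding $d\times|F|$ submatrix, the Gram matrix $\alpha_F(a) = A_F^\top A_F$ is invertible near $\tilde a$. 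The coarea formula then identifies the integral above with the joint density at $-b_F := -(b_j)_{j\in F}$ of $A_F^\top X$ for $X \sim N(0, I_d)$, namely the strictly positive function $(2\pi)^{-|F|/2} \det(\alpha_F(a))^{-1/2} \exp(-\tfrac{1}{2} b_F^\top \alpha_F(a)^{-1} b_F)$ on $U_F$.

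With these two facts in hand, linear independence follows by descending induction on $|F|$: if $\sum_{F \in \mathcal{F}} c_F \varphi_F \equiv 0$ near $(\tilde a, \tilde b)$, then applying $\partial_b^{F_0}$ collapses the identity to $\sum_{F \supset F_0} c_F \partial_b^{F_0} \varphi_F = 0$; taking $F_0$ of maximal cardinality in $\mathcal{F}$ first isolates $c_{F_0} \partial_b^{F_0} \varphi_{F_0}$, which is nowhere zero, forcing $c_{F_0} = 0$, and the induction then descends through smaller faces with all strictly larger coefficients already killed. This produces $|\mathcal{F}|$ linearly independent solutions and hence $\rank M \geq |\mathcal{F}|$. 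The main step requiring care is the explicit evaluation of $\partial_b^F \varphi_F$ as a genuine analytic function: it reduces to justifying the coarea/change-of-variables identity for the product of Dirac deltas $\prod_{j \in F} \delta(f_j)$, whose transversality is exactly what the general-position hypothesis provides.
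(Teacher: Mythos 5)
Your proposal is correct, but it isolates the coefficients by a different mechanism than the paper. The paper also orders the faces by cardinality, but instead of differentiating it uses a limiting argument: for the face $F_k$ under consideration it moves the offsets, $b_j(t)=\tilde b_j\pm t$ according to whether $j\in F_k$ or not, and applies dominated convergence so that every $\varphi_{F_\ell}$ with $F_\ell\not\subset F_k$ tends to $0$ while $\varphi_{F_k}$ tends to a nonzero constant; this kills the coefficients one at a time using only Lebesgue convergence, with no differentiation under the integral sign. Your route instead exploits the triangularity of the array $\bigl(\partial_b^{J}\varphi_F\bigr)$ with respect to inclusion — $\partial_b^{J}\varphi_F\equiv 0$ for $J\not\subset F$ — together with the explicit evaluation of the diagonal entries $\partial_b^{F}\varphi_F$ as the Gaussian density $(2\pi)^{-|F|/2}\det\alpha_F(a)^{-1/2}\exp\bigl(-\tfrac12 b_F^\top\alpha_F(a)^{-1}b_F\bigr)$, which is nowhere zero on $U_F$; the transversality needed for the coarea/delta computation is exactly the linear independence of the columns $(a_{ij})_{i},\ j\in F$, guaranteed by Corollary \ref{ptb:13}. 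What each buys: the paper's argument is analytically more elementary (no distributional identities to justify), while yours yields more — explicit nonvanishing leading derivatives (closely tied to the quantities appearing in \eqref{pfaff3}) and in fact the stronger statement that the full solution vectors $(\partial_b^J\varphi_F)_{J\in\mathcal F}$ are linearly independent, not merely their $J=\emptyset$ components. Your justification that each $\varphi_F$ is a solution of $M$ (Lemma \ref{lem:1}, Gaussian weighting, then integration via Theorem \ref{th:hol}) is in substance what the paper invokes; the only point to flag explicitly is that passing from a solution of $N_q$ to a solution of the integration module uses the rapid decay of the Gaussian factor to discard the $\partial_{x_i}$-terms, and that differentiating the relation $\sum_F c_F\varphi_F\equiv 0$ is legitimate because the $\varphi_F$ are real analytic near $(\tilde a,\tilde b)$ by Proposition \ref{prop:c-int}.
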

\begin{proof}
By Proposition \ref{prop:c-int} and Theorem \ref{th:hol}, it is obvious that 
the function $\varphi_F$ is a solution of $M$.

Suppose a matrix $\tilde a =(\tilde a_{ij})$ and 
a vector $\tilde b =(\tilde b_j)$ satisfy equation \eqref{calP}.
Note that $\tilde a_{ij}$ and $\tilde b_j$ are real numbers.
Let $U\subset \mathbf C^{d\times n}$ be a sufficiently small neighborhood of $\tilde a $.
Note that the function $\varphi_F$ is defined on $\{(a, b): a\in U, b\in \mathbf C^n\}$,
by Proposition \ref{prop:c-int}.
We prove the linear independence of these functions.
Suppose $\mathcal F=\{F_1, \dots, F_s\}$ and 
\[
|F_1| \leq |F_2| \leq \cdots\leq |F_s|.
\]
We denote $\varphi_{F_j}$ by $\varphi_j$.
Suppose $\sum_{i=1}^s c_j\varphi_j=0$ for some complex numbers 
$c_j\, (1\leq i\leq s)$.
Take an arbitrary $k$ such that $1\leq k \leq s$,
and suppose $c_1=\cdots=c_{k-1}=0$. It is enough to show that $c_k=0$.

Note that $F_\ell\not\subset F_k$ for $\ell>k$,
since $|F_\ell| \geq |F_k|$.
Define $a(t)$ and $b(t)$ as 
\[
a_{ij}(t) = \tilde a_{ij}, \quad
b_j(t)    = \begin{cases} \tilde b_j+t & j \in F_k\\ \tilde b_j-t & j \notin F_k \end{cases},
\quad t\in [0, \infty).
\]
Since there is an element of $F_\ell$ 
which is not included in $F_k$ for $\ell > k$, 
we have $\lim_{t\rightarrow \infty}\prod_{j\in F_\ell}H(f_j(a(t), b(t), x))=0$
for all $x\in \mathbf R^d$.
By the Lebesgue convergence theorem, we have 
\[
\lim_{t\rightarrow \infty}\varphi_\ell(a(t), b(t))= 0 \quad (\ell> k).
\]
Since we have $\lim_{t\rightarrow\infty}\prod_{j\in F_k}H(f_j(a(t), b(t), x))=1$ 
for all $x\in \mathbf R^d$, again
by the Lebesgue convergence theorem, we have
$\lim_{t\rightarrow \infty} \varphi_\ell(a(t), b(t))= 1$.
By the assumption of the induction, we have 
$
\sum_{j=k}^s c_j\varphi_j(a(t), b(t))=0.
$
Taking the limit of both sides as $t\rightarrow \infty$, 
we have $c_k= 0$.
\end{proof}

\begin{theorem}
The holonomic rank of $M$ is equal to the number of nonempty faces of $P$, 
i.e., 
\[
\mathrm{rank}(M) = |\mathcal F|.
\]
In addition, a Pfaffian equation associated with $M$ is given by 
\begin{align}
\label{pfaff1} \pd{a_{ij}}g^J &= \sum_{k=1}^na_{ik}\pd{b_k}\pd{b_j}g^J
\quad (1\leq i\leq d, \, 1\leq j\leq n, \, J\in \mathcal F), \\
\label{pfaff2} \pd{b_j} g^J &= g^{J\cup \{j\}} \quad (j\in J^c, \, J\in \mathcal F), \\
\label{pfaff3} \pd{b_j}g^J 
&= -\sum_{k\in J}\alpha^{jk}_J(a)\left(b_kg^J+\sum_{\ell\in J^c}\alpha_{k\ell}(a)g^{J\cup \{\ell\}}\right)
\quad (j\in J, \, J\in {\cal F}).
\end{align}
Here, $(\alpha^{ij}_F(a))_{i, j\in F}$ is the inverse matrix of $\alpha_F(a)$ 
in \eqref{alphaF}.
Note that the right-hand side of \eqref{pfaff1} can be reduced by 
\eqref{pfaff2} and \eqref{pfaff3}.
\end{theorem}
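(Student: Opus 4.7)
The plan is to derive the three Pfaffian equations \eqref{pfaff1}, \eqref{pfaff2}, \eqref{pfaff3} directly from the defining relations \eqref{op:p1}, \eqref{op:p2}, \eqref{op:p3} of $M$, and then to use them to show that $\{g^J : J \in \mathcal F\}$ generates $\mathbf C(a,b) \otimes_{\mathbf C[a,b]} M$ as a $\mathbf C(a,b)$-vector space. This immediately yields $\rank M \leq |\mathcal F|$, and combined with Lemma \ref{lem:liearly-independent} we conclude $\rank M = |\mathcal F|$.

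Equations \eqref{pfaff1} and \eqref{pfaff2} are literal rewritings of the relations \eqref{op:p1} and \eqref{op:p2}, so there is nothing to prove there. The crucial step is \eqref{pfaff3}. For $J \in \mathcal F$ and $j \in J$, I first rewrite \eqref{op:p3} as
\[
b_j g^J + \sum_{k=1}^n \alpha_{jk}(a) \pd{b_k} g^J = 0,
\]
using the definition $\alpha_{jk}(a) = \sum_{i=1}^d a_{ij} a_{ik}$ from \eqref{alphaF}. I then split the sum over $k$ into the pieces $k \in J$ and $k \in J^c$, and substitute $\pd{b_k} g^J = g^{J \cup \{k\}}$ from \eqref{pfaff2} for $k \in J^c$. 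This yields the square linear system
\[
\sum_{k \in J} \alpha_{jk}(a) \pd{b_k} g^J = -b_j g^J - \sum_{\ell \in J^c} \alpha_{j\ell}(a) g^{J \cup \{\ell\}}, \quad j \in J,
\]
whose coefficient matrix is exactly the submatrix $\alpha_J(a)$ in \eqref{alphaF}. Inside the proof of Lemma \ref{lem:c-int} it was observed that the general position hypothesis forces the columns of $(\tilde a_{ij})_{i,j \in J}$ to be linearly independent, so $\det \alpha_J(\tilde a) \neq 0$; hence $\det \alpha_J(a)$ is a nonzero polynomial and $\alpha_J(a)^{-1}$ exists over $\mathbf C(a,b)$. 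Multiplying by the entries $\alpha^{jk}_J(a)$ of this inverse yields \eqref{pfaff3} verbatim.

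Once all three Pfaffian equations are in place, every first-order derivative $\pd{a_{ij}} g^J$ and $\pd{b_j} g^J$ lies in the $\mathbf C(a,b)$-span of $\{g^{J'}\}_{J' \in \mathcal F}$: for $\pd{a_{ij}}$ one applies \eqref{pfaff1} and then reduces the resulting $\pd{b_k}\pd{b_j} g^J$ via \eqref{pfaff2} and \eqref{pfaff3} as indicated in the theorem statement. Since $M$ is generated as a $D_{ab}$-module by $\{g^J\}_{J \in \mathcal F}$, an induction on the order of differential operators shows that $\mathbf C(a,b) \otimes_{\mathbf C[a,b]} M$ is spanned over $\mathbf C(a,b)$ by these residue classes, so $\rank M \leq |\mathcal F|$. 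The matching lower bound from Lemma \ref{lem:liearly-independent} gives equality, and the integrability of the connection follows automatically from the fact that \eqref{pfaff1}--\eqref{pfaff3} describe the full $\mathbf C(a,b)$-module structure of a holonomic module.

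I expect the only nontrivial step to be the inversion of $\alpha_J(a)$ that produces \eqref{pfaff3}; the general position assumption is used precisely here, and once it is in hand everything else reduces to rearranging the defining relations of $M$ and quoting Lemma \ref{lem:liearly-independent} for the opposite inequality.
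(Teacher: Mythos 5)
Your proposal is correct and takes essentially the same route as the paper: equations \eqref{pfaff1}, \eqref{pfaff2} are read off from \eqref{op:p1}, \eqref{op:p2}, and \eqref{pfaff3} is obtained from \eqref{op:p3} together with \eqref{op:p2} and invertibility of $\alpha_J(a)$ over $\mathbf C(a,b)$, after which the $\mathbf C(a,b)$-span argument gives $\rank M\le|\mathcal F|$ and Lemma \ref{lem:liearly-independent} gives the reverse inequality. The only cosmetic difference is that you solve the linear system $\alpha_J(a)(\pd{b_k}g^J)_{k\in J} = (-b_jg^J - \sum_{\ell\in J^c}\alpha_{j\ell}g^{J\cup\{\ell\}})_{j\in J}$ forward by multiplying by $\alpha_J(a)^{-1}$, whereas the paper verifies \eqref{pfaff3} backward by substituting \eqref{op:p2} and \eqref{op:p3} into its right-hand side and simplifying with $\sum_{k\in J}\alpha^{jk}_J\alpha_{k\ell}=\delta_{j\ell}$; these are the same computation read in opposite directions.
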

\begin{proof}
We have \eqref{pfaff1} and \eqref{pfaff2} 
by \eqref{op:p1} and \eqref{op:p2}, respectively.
By \eqref{op:p2} and \eqref{op:p3}, 
the right-hand side of \eqref{pfaff3} can be written as
\begin{align*}
-\sum_{k\in J}\alpha^{jk}_J\left(b_kg^J+\sum_{\ell\in J^c}\alpha_{k\ell}g^{J\cup \{\ell\}}\right)
&= \sum_{k\in J}\alpha^{jk}_J\left(\sum_{\ell=1}^n\alpha_{k\ell}\pd{b_\ell}g^J 
-\sum_{\ell\in J^c}\alpha_{k\ell}\pd{b_\ell}g^J\right)\\
&= \sum_{k\in J}\alpha^{jk}_J\sum_{\ell\in J}\alpha_{k\ell}\pd{b_\ell}g^J
=\pd{b_j}g^J.
\end{align*}
Consequently, we have \eqref{pfaff3}.

By \eqref{pfaff1}, \eqref{pfaff2}, and \eqref{pfaff3}, 
the module $\mathbf C(a, b)\otimes_{\mathbf C[a, b]} M$ is spanned by
$g^J\, (J\in\mathcal F)$ as a linear space over $\mathbf C(a, b)$, 
and we have $\mathrm{rank}(M) \leq |\mathcal F|$.
By this inequality and Lemma \ref{lem:liearly-independent}, 
we have $\mathrm{rank}(M) = |\mathcal F|$.
\end{proof}
\begin{remark}\label{rem:1}
We note that the Heaviside function 
$H\left(\sum_{i=1}^d a_{ij}x_i+b_j\right)$ is equal to 
$\lim_{\lambda\rightarrow 0}\left(\sum_{i=1}^d a_{ij}x_i+b_j\right)_+^\lambda$
as a Schwartz distribution.
Thus we may expect that our integral representation is a ``specialization''
of the integral considered 
in the cohomology groups in \cite{aomoto-kita-orlik-terao}.
However, we have no rigorous understanding of this limiting procedure
of the twisted cohomology.
An interesting observation is that 
the holonomic rank of $M$ can be smaller than the dimension of 
the twisted cohomology group
$H^d\left(\mathcal G, \nabla \right)$,
where 
$
\mathcal G:= 
\mathbf C^d-V\left(\prod_{j=1}^n\left(\sum_{i=1}^d a_{ij}x_i+b_j\right)\right)
$
and 
$\nabla$ is a connection defined in \cite{aomoto-kita-orlik-terao}.
In our case, the dimension of the twisted cohomology is 
$\sum_{i=0}^d\binom{n}{i}$.
The number of faces of the polyhedron in general position with $n$ facets 
can be smaller than this value.
\end{remark}

\medskip\noindent
{\bf Acknowledgements.}\/
The author wishes to thank Nobuki Takayama 
for useful discussions during the preparation of this paper, 
and Akimichi Takemura for the suggestion 
that the holonomic gradient method can be applied 
to the evaluation of the normal probability of convex polyhedra.


\appendix
\section{Proof of Theorem \ref{ptb:8}}
\label{sec:6}

In this appendix, we prove Theorem \ref{ptb:8}.
For this purpose, 
we need to present some notation and some lemmas,
most importantly, Theorem \ref{ptb:5}.
The argument in the proof of Lemma \ref{ptb:1} is also important,
since analogous arguments will appear repeatedly 
in the proof of Theorem \ref{ptb:5}.

Let $a=(a_1, \dots, a_n)\in \mathbf R^{(d+1)\times n} $ be a matrix where
we denote by $a_j$ the $j$-th column vector of $a$.
Put $a_0 :=(1, 0, \dots, 0)^\top\in \mathbf R^{d+1} $.
For a matrix $a$, we put 
\begin{align*}
H_j(a)
&:= \left\{x \in \mathbf R^d :\sum_{i=1}^d a_{ij} x_i +a_{0j} \geq 0 \right\}
\quad (j\in [n]), 
\\
\mathcal H(a)
&:=\left\{H_1(a), \dots, H_n(a)\right\}, 
\\
P(a)
&:= \bigcap_{j=1}^n H_j(a), 
\\
F_j(a)
&:= \partial H_j(a) \cap P(a)\quad (j\in [n]), 
\\
F_J(a)&:= \bigcap_{j\in J}  F_j(a)\quad (J\subset [n]), 
\\
\mathcal F(a)
&:= \left\{J\subset[n]: F_J(a)\neq \emptyset \right\} .
\end{align*}
Note that $F_j(a)$ is not necessarily a facet of $P(a)$, 
and $\mathcal F(a)$ is not necessarily equivalent to 
the abstract simplicial complex associated with $P(a)$.
For this difficulty, 
we need the notion of families of half-spaces in general position.
In fact, in Lemma \ref{ptb:4}, we will show that
the abstract simplicial complex associated with $P(a)$ 
is equivalent to $\mathcal F(a)$ under the general position assumption, 
which is required by the proof of Theorem \ref{ptb:8}.

In order to consider combinatorial structures at the point at infinity, 
we introduce the following notion:
for the abstract simplicial complex $\mathcal F$ associated with a polyhedron, 
the {\it homogenization} of  $\mathcal F$ is the abstract simplicial complex
defined by
\[
\hat{\mathcal F}
:= \left\{J\subset \{0, 1, \dots, n\} : \hat F_J \neq \{0\} \right\}.
\]
Since $\mathcal F\subset \hat{\mathcal F}$, 
we have $\{j\} \in \hat{\mathcal F} $ for $j\in [n]$.
Note that $\{0\} \in \hat{\mathcal F}$ does not hold in general.
The following are the ``homogenization'' of 
the notations given in the previous paragraph.
For a matrix $a$, we put 
\begin{align*}
\hat H_j(a)
&:= \left\{x\in \mathbf R^{d+1}  : \sum_{i=0}^da_{ij} x_i\geq 0\right\}
\quad \left(j\in \{0, 1, \dots, n\}\right), 
\\
\hat{\mathcal H} (a)
&:= \left\{\hat H_j(a) : j\in \{0, 1, \dots, n\} \right\}, 
\\
\hat P(a)
&:= \bigcap_{j=0}^n  \hat H_j(a), 
\\
\hat F_j(a)
&:= \partial \hat H_j(a) \cap \hat P(a)\quad \left(j\in \{0, 1, \dots, n\}\right), 
\\
\hat{\mathcal F} (a)
&:= \left\{J\subset \{0, 1, \dots, n\} : \hat F_J(a) \neq \{0\}  \right\}, 
\\
\hat F_J(a)&:= \bigcap_{j\in J}  \hat F_j(a)\quad 
\left(J\subset \{0, 1, \dots, n\}\right).
\end{align*}
For all $J\subset \{0, 1, \dots, n\}$, $\hat F_J(a)$ includes the zero vector.
Analogous to 
the case of the abstract simplicial complex associated with polyhedra, 
we have $\mathcal F(a) \subset \hat{\mathcal F}(a)$.
For a family of half-spaces in general position, we have the following lemma, 
which is required by the proof of Lemma \ref{ptb:4}.
\begin{lemma} \label{ptb:1}
Suppose $\mathcal H(a)$ is in general position.
Let $J\subset [n]$.
If $J\in \mathcal F(a)$, 
then the set $F_J(a)$ is a $d-|J|$-dimensional face of $P(a)$.
If $J\notin \mathcal F(a)$, 
then the set $F_J(a)$ is empty.
In particular, $P(a)$ is a $d$-dimensional polyhedron.
\end{lemma}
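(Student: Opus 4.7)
The plan is to relate the affine polyhedron $P(a)\subset \mathbf R^d$ to its homogenization $\hat P(a)\subset \mathbf R^{d+1}$ via the slice $\{x_0=1\}$, and then read off all dimension statements directly from the general position hypothesis on $\hat{\mathcal H}(a)$. The assertion that $F_J(a)$ is a face (as opposed to merely a subset) of $P(a)$ will be a standard polyhedral-combinatorics observation, and the empty-set implication is essentially tautological once one unwinds the definition $\mathcal F(a)=\{J: F_J(a)\ne\emptyset\}$; so the real content of the lemma is the dimension count.

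First I would verify the slicing bijection $F_J(a)\leftrightarrow \hat F_J(a)\cap\{x_0=1\}$ induced by $x\mapsto(1,x_1,\dots,x_d)$. This is immediate from the defining inequalities: on the slice $\{x_0=1\}$ the constraint $\hat H_0=\{x_0\ge 0\}$ is trivial, and for $j\in[n]$ the conditions $\langle a_j,(1,x)\rangle\ge 0$ and $\langle a_j,(1,x)\rangle=0$ coincide with $x\in H_j(a)$ and $x\in\partial H_j(a)$, respectively. Next, by general position, $\hat F_J(a)$ is either a $(d+1-|J|)$-dimensional cone or $\{0\}$. Since $\hat F_J(a)\subset\{x_0\ge 0\}$ is a cone, the slice at $\{x_0=1\}$ is nonempty iff $\hat F_J(a)$ contains some vector with positive $x_0$-coordinate; in the degenerate case and in the case $\hat F_J(a)\subset\{x_0=0\}$ the slice is empty, so $F_J(a)=\emptyset$ and $J\notin\mathcal F(a)$. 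Otherwise the $(d+1-|J|)$-dimensional linear hull of $\hat F_J(a)$ meets $\{x_0=1\}$ transversally in a $(d-|J|)$-dimensional affine subspace, and $\hat F_J(a)\cap\{x_0=1\}$ is a full cross-section of the cone inside that subspace, hence of affine dimension exactly $d-|J|$. That $F_J(a)$ is a face of $P(a)$ follows by writing $F_J(a)=P(a)\cap\{x:\sum_{j\in J}\ell_j(x)=0\}$ with $\ell_j(x)=\sum_i a_{ij}x_i+a_{0j}$ and observing that $\sum_{j\in J}\ell_j\ge 0$ is valid on $P(a)$ as a nonnegative combination of valid inequalities.

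The ``in particular'' assertion follows by specializing to $J=\emptyset$: since $P(a)$ is nonempty in the case of interest, $\hat P(a)=\hat F_\emptyset(a)$ cannot be $\{0\}$ and is therefore $(d+1)$-dimensional; its interior is then contained in $\{x_0>0\}$ and slices to a $d$-dimensional set. The main obstacle I expect is the careful dimension bookkeeping in the cone-slice argument, specifically the claim that a $k$-dimensional closed convex cone $C$ not contained in $\{x_0=0\}$ meets $\{x_0=1\}$ in a set of affine dimension exactly $k-1$: once one picks $c\in C$ with $c_0>0$ and uses that $C$ has nonempty relative interior in its $k$-dimensional linear hull $L$, a relatively open neighborhood of $c/c_0$ in $L\cap\{x_0=1\}$ can be shown to lie in $C$, delivering the right dimension.
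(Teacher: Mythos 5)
Your argument is correct, but it takes a genuinely more direct route than the paper's. You read the dimension of $\hat F_J(a)$ straight off the definition of general position (once $F_J(a)\neq\emptyset$ forces $\hat F_J(a)\neq\{0\}$, the definition says $\hat F_J(a)$ is a $(d+1-|J|)$-dimensional cone) and then apply a generic cone-slicing fact at $\{x_0=1\}$; you also make the face property explicit via the valid inequality $\sum_{j\in J}\bigl(\sum_{i}a_{ij}x_i+a_{0j}\bigr)\geq 0$, which the paper leaves implicit. The paper instead picks, for each $k\in J^c$, a point of $\hat F_J(a)$ off $\partial\hat H_k(a)$, averages these to land in the set \eqref{ptb:2}, and runs a ball argument to establish the affine-hull identity \eqref{ptb:3} and the linear independence of $a_0,a_{j(1)},\dots,a_{j(s)}$, from which the dimension count follows. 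Your shortcut proves the lemma with less machinery, but the paper's longer route produces exactly the byproducts that are reused afterwards (Corollaries \ref{ptb:11} and \ref{ptb:13}, and the reappearance of \eqref{ptb:2} and \eqref{ptb:3} in Step 2 of the proof of Theorem \ref{ptb:5}), so those would still have to be extracted separately if your proof replaced the paper's. One small repair in your cone-slice step: as stated, ``a relatively open neighborhood of $c/c_0$ in $L\cap\{x_0=1\}$ lies in $C$'' fails when $c/c_0$ is a relative boundary point of $C$; instead take $r$ in the relative interior of $C$ and replace $c$ by $(1-\epsilon)c+\epsilon r$ with small $\epsilon\in(0,1)$, which is a relative interior point with positive $x_0$-coordinate, slice a relatively open ball around it, and rescale by the cone property; the matching upper bound comes from $C\cap\{x_0=1\}\subset L\cap\{x_0=1\}$. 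Finally, your caveat that the ``in particular'' statement needs $P(a)\neq\emptyset$ is accurate and consistent with how the paper uses the lemma, since general position alone allows $\hat P(a)=\{0\}$.
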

\begin{proof}
If $J\notin \mathcal F(a)$, it is obvious that $F_J(a)=\emptyset$.
Let us consider the case where $J\in \mathcal F(a)$.
Put $J=\{j(0), \dots, j(s)\}, \, J^c:= \{0, 1, \dots, n\}\backslash J$.
Since $\mathcal H(a)$ is in general position, 
we have $\hat F_J(a)=\{0\}$ or
\begin{equation}\label{ptb:12}
\hat F_J(a) \supsetneq \hat F_J(a) \cap \partial \hat H_k(a)
\end{equation}
for any $k\in J^c$.
In fact, 
the equality of \eqref{ptb:12} contradicts the assumption about the dimension
when $\hat F_J(a)\neq\{0\}$.
However, we have $\hat F_J(a)\neq\{0\}$,
since $J\in \mathcal F(a)$ implies $J\in \hat{\mathcal F}(a)$.
Hence, condition \eqref{ptb:12} holds for any $k\in J^c$.
For $k\in J^c$, 
take $x^{(k)} \in \hat F_J(a)\backslash \partial \hat H_k(a)$.
Then the affine combination
\[
\frac{1} {n+1-|J|}  \sum_{k\in J^c}  x^{(k)}
\]
is an element of 
\begin{equation} \label{ptb:2}
\hat F_J(a)
\cap
\left(\bigcap_{k\in J^c}  \mathrm{int} (\hat H_k(a)) \right),
\end{equation}
where  $\mathrm{int} (S)$ denotes the interior of $S$.
Since $0\in J^c$, we have
\[
\hat F_J(a)
\cap \{x_0 =1 \}
\cap \left(\bigcap_{k\in J^c}  \mathrm{int} (\hat H_k(a)) \right)
\neq \emptyset.
\]
Let $x$ be an element of this set, and let 
$B$ be an open ball centered at $x$ whose closure is included in 
$\bigcap_{k\in J^c}  \mathrm{int} \left(\hat H_k(a)\right)$.
Let $x'$ be an arbitrary point in 
$\{x_0 =1 \} \cap\bigcap_{j\in J}  \partial \hat H_j(a) $, and let 
$x''$ be an intersection point of 
$\partial B$ and the line between $x$ and $x'$.
Since $x''\in \hat F_J(a)$ and 
$x'$ can be written as an affine combination of $x$ and $x''$, 
we have $x'\in \aff(\hat F_J(a)).$
By the arbitrariness of $x'$, 
we have 
\[
\aff(F_J(a))
\cong \aff\left(\hat F_J(a)\cap \{x_0=1\} \right)
= \bigcap_{j\in J} \partial \hat H_j(a) \cap \{x_0=1\}.
\]
Analogously, we have 
\begin{equation} \label{ptb:3}
\aff(\hat F_J(a))
=
\bigcap_{j\in J}  \partial \hat H_j(a).
\end{equation}

Since $\hat{\mathcal H} (a)$ is in general position, 
the left-hand side of \eqref{ptb:3} is a $d+1-|J|$-dimensional cone.
Consequently, the vectors $a_{j(1)}, \dots, a_{j(s)} $ are linearly independent.
Moreover, the vectors $a_0, a_{j(1)}, \dots, a_{j(s)}$ are 
linearly independent. 
In fact, if there are $c_j\, (j\in J)$ such that $a_0 =\sum_{j\in J} c_ja_j $, 
then we have 
$
0=\sum_{j\in J} c_j \left(\sum_{i=1}^d a_{ij} x_i+a_{0j} \right)=1
$
for $x\in F_J.$
Hence, the dimension of $\aff(F_J)$ is equal to $d-|J|$.
\end{proof}
By the argument in the proof of Lemma \ref{ptb:1}, 
when $\mathcal H(a)$ is in general position, 
$J\in \hat{\mathcal F}(a)$ implies $F_J(a)\neq\emptyset$.
Hence, we have the following corollary.
\begin{corollary}\label{ptb:11}
When $\mathcal H(a)$ is in general position, 
for $J\subset[n]$, 
$J\in \mathcal F(a)$ holds if and only if $J\in \hat{\mathcal F}(a)$ holds.
\end{corollary}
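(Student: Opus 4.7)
The plan is to prove the two inclusions separately, with the forward direction $\mathcal F(a)\subset\hat{\mathcal F}(a)$ being essentially formal, and the reverse direction $\hat{\mathcal F}(a)\subset \mathcal F(a)$ being a direct extraction of the averaging argument already carried out inside the proof of Lemma \ref{ptb:1}.

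For the forward direction, I would simply use the embedding $\mathbf R^d \hookrightarrow \mathbf R^{d+1}$, $x\mapsto (1,x)$. If $J\in \mathcal F(a)$, pick any $x\in F_J(a)$; then $(1,x)$ lies in $\hat H_j(a)$ for all $j\in\{0,1,\dots,n\}$ and in $\partial \hat H_j(a)$ for all $j\in J$, so $(1,x)\in \hat F_J(a)$. Since $(1,x)\neq 0$, we obtain $\hat F_J(a)\neq\{0\}$, i.e.\ $J\in \hat{\mathcal F}(a)$. This part does not require general position.

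For the reverse direction, suppose $J\in \hat{\mathcal F}(a)$, so $\hat F_J(a)\neq\{0\}$. The general position hypothesis on $\hat{\mathcal H}(a)$ implies that for every $k\in J^c:=\{0,1,\dots,n\}\setminus J$ the strict inclusion
\[
\hat F_J(a)\supsetneq \hat F_J(a)\cap \partial \hat H_k(a)
\]
must hold, exactly as observed in display \eqref{ptb:12} of Lemma \ref{ptb:1}: equality would violate the required dimension drop of $\hat F_{J\cup\{k\}}(a)$. Choosing $x^{(k)}\in \hat F_J(a)\setminus \partial\hat H_k(a)$ for each $k\in J^c$, the convex combination $\frac{1}{n+1-|J|}\sum_{k\in J^c}x^{(k)}$ lies in
\[
\hat F_J(a)\cap \Bigl(\bigcap_{k\in J^c}\mathrm{int}(\hat H_k(a))\Bigr).
\]
Since $0\in J^c$, this averaged point satisfies $x_0>0$, so after rescaling by $1/x_0$ we obtain a point of $\hat F_J(a)\cap\{x_0=1\}$, which under the identification $\{x_0=1\}\cong \mathbf R^d$ is a point of $F_J(a)$. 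Hence $F_J(a)\neq\emptyset$ and $J\in \mathcal F(a)$.

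The only step requiring any care is verifying that the averaging argument still applies in its current generality, namely for \emph{all} $J\in \hat{\mathcal F}(a)$ and not merely the $J\in \mathcal F(a)$ treated in Lemma \ref{ptb:1}; but inspection of that proof shows that the hypothesis used there was precisely $\hat F_J(a)\neq \{0\}$, i.e.\ $J\in \hat{\mathcal F}(a)$. So no additional obstacle arises, and the corollary follows by packaging the two inclusions together.
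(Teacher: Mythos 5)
Your proof is correct and follows essentially the same route as the paper: the inclusion $\mathcal F(a)\subset\hat{\mathcal F}(a)$ via the embedding $x\mapsto(1,x)$ (stated earlier in the appendix as immediate), and the reverse inclusion by re-running the averaging argument from the proof of Lemma \ref{ptb:1}, whose only use of $J\in\mathcal F(a)$ is indeed to secure $\hat F_J(a)\neq\{0\}$. No gaps; your explicit remark that the Lemma's argument applies verbatim to all $J\in\hat{\mathcal F}(a)$ is exactly the observation the paper invokes.
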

Similarly, by the argument in the proof of Lemma \ref{ptb:1}, 
we have the following.
\begin{corollary}\label{ptb:13}
When $\mathcal H(a)$ is in general position and 
$J=\{j(1), \dots, j(s)\}\in \mathcal F(a)$, 
the column vectors $a_{j(1)}, \dots, a_{j(s)}$ are linearly independent.
\end{corollary}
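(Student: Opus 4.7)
The approach is to extract and re-run the linear-algebra portion of the proof of Lemma~\ref{ptb:1}. The content of the corollary is essentially the conclusion that appears after equation \eqref{ptb:3} in that proof, so the plan is to assemble the three ingredients needed for that conclusion and present them in sequence, without needing the subsequent material about $P(a)$ being $d$-dimensional.

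First, I would translate the hypothesis $J\in\mathcal F(a)$ into a statement about the homogenized object: since $\mathcal F(a)\subset\hat{\mathcal F}(a)$, we have $\hat F_J(a)\neq\{0\}$. Second, I would invoke the general position assumption on $\hat{\mathcal H}(a)$, which forces $\hat F_J(a)$ to be a cone of dimension $d+1-|J|$ (the only alternative allowed by the definition, $\hat F_J(a)=\{0\}$, has just been excluded). Third, I would show the identification
\[
\aff(\hat F_J(a)) \;=\; \bigcap_{j\in J}\partial\hat H_j(a).
\]
The inclusion $\subset$ is immediate. For $\supset$, I would replay the ``affine combination'' trick from Lemma~\ref{ptb:1}: pick, for each $k\in J^c$, a point $x^{(k)}\in\hat F_J(a)\setminus\partial\hat H_k(a)$ (available because general position forces the strict inclusion \eqref{ptb:12}), average them to obtain an interior point $x$ of $\hat F_J(a)$ relative to the hyperplanes $\partial\hat H_k(a)$, $k\in J^c$, and then, given any $x'\in\bigcap_{j\in J}\partial\hat H_j(a)$, express $x'$ as an affine combination of $x$ and a second point of $\hat F_J(a)$ obtained by moving from $x$ toward $x'$ along a short segment that stays inside all the half-spaces $\hat H_k(a)$ for $k\in J^c$.

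Once the identification is established, the conclusion is pure linear algebra: the right-hand side is the kernel of the linear map $\mathbf{R}^{d+1}\to\mathbf{R}^{|J|}$ whose $j$-th coordinate is $\sum_{i=0}^d a_{ij}x_i$, i.e.\ the map with matrix $(a_j)_{j\in J}^\top$, and its dimension has just been computed to be $d+1-|J|$. Hence the rank of this matrix equals $|J|=s$, which is precisely the linear independence of the columns $a_{j(1)},\dots,a_{j(s)}$.

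The only step with any real content is the third one, the identification $\aff(\hat F_J(a))=\bigcap_{j\in J}\partial\hat H_j(a)$; everything else is bookkeeping. The potential obstacle there is the same as in Lemma~\ref{ptb:1}: one must be careful that the point $x$ produced by averaging genuinely lies in the relative interior of $\hat F_J(a)$ with respect to \emph{all} the constraints indexed by $J^c$ simultaneously, so that short segments from $x$ in any direction of $\bigcap_{j\in J}\partial\hat H_j(a)$ stay inside $\hat P(a)$. This is exactly what the general position hypothesis is designed to guarantee, so the argument should go through essentially verbatim from Lemma~\ref{ptb:1}, and I would in fact write the corollary's proof as a short pointer to that argument together with the final dimension count above.
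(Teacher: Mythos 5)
Your proposal is correct and takes essentially the same route as the paper, which proves the corollary merely by pointing to the argument in the proof of Lemma~\ref{ptb:1}: the strict inclusions \eqref{ptb:12} supplied by general position, the averaged point in the set \eqref{ptb:2}, the identification \eqref{ptb:3} of $\aff(\hat F_J(a))$ with $\bigcap_{j\in J}\partial\hat H_j(a)$, and the resulting dimension count. Your explicit rank--nullity phrasing of the last step is just a spelled-out version of the paper's ``consequently, the vectors are linearly independent,'' and you rightly flag the affine-hull identification as the only step with real content.
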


We now review the Farkas lemma in \cite{ziegler}.
\begin{proposition}[Farkas lemma] \label{ptb:10}
The inequality $\sum_{i=1}^dc_ix_i +c_0\geq 0$ is valid for $P(a)$
if and only if one of the following conditions holds:
\begin{enumerate}
  \renewcommand{\labelenumi}{(\roman{enumi})}         
\item There exist $\lambda_j\geq 0\, (j\in [n])$ such that 
\[
\sum_{j=1}^n a_{ij} \lambda_j = c_i, \quad
\sum_{j=1}^n a_{0j} \lambda_j \leq c_0\quad (1\leq i\leq d);
\]
\item There exist $\lambda_j\geq 0\, (j\in [n])$ such that
\[
\sum_{j=1}^n a_{ij} \lambda_j = 0, \quad
\sum_{j=1}^n a_{0j} \lambda_j < 0\quad (1\leq i\leq d).
\]
\end{enumerate}
\end{proposition}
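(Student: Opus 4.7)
The plan is to recognize Proposition \ref{ptb:10} as the affine Farkas lemma and to deduce it from the homogeneous Farkas lemma stated in \cite{ziegler}. The $(\Leftarrow)$ direction is a direct verification in each case. Assuming (i), summing the defining inequalities $\sum_i a_{ij} x_i + a_{0j} \geq 0$ with nonnegative weights $\lambda_j$ yields $\sum_i c_i x_i + \sum_j \lambda_j a_{0j} \geq 0$ on $P(a)$, and then $\sum_j \lambda_j a_{0j} \leq c_0$ gives the claim. Assuming (ii), the same summation produces $0 \leq \sum_j \lambda_j a_{0j} < 0$ for any point of $P(a)$, so $P(a) = \emptyset$ and the stated inequality is vacuously valid.

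For the $(\Rightarrow)$ direction, my plan is to split on whether $P(a)$ is empty. If $P(a) = \emptyset$, I would apply Farkas' alternative directly to the infeasible system $\{\sum_i a_{ij} x_i + a_{0j} \geq 0\}_{j \in [n]}$: infeasibility produces nonnegative multipliers $\lambda_j$ with $\sum_j a_{ij} \lambda_j = 0$ for all $i$ and $\sum_j a_{0j} \lambda_j < 0$, which is exactly condition (ii).

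If $P(a) \neq \emptyset$, I would form the finitely generated (hence closed) convex cone
\[
C := \operatorname{cone}\{(a_{1j}, \ldots, a_{dj}, a_{0j}) : j \in [n]\} + \mathbb{R}_{\geq 0} \cdot (0, \ldots, 0, 1) \subset \mathbb{R}^{d+1}.
\]
Condition (i) is precisely the statement $(c_1, \ldots, c_d, c_0) \in C$, where the slack generator encodes the inequality $\sum_j \lambda_j a_{0j} \leq c_0$. If this membership fails, the separating hyperplane theorem supplies a vector $(x_1, \ldots, x_d, t) \in \mathbb{R}^{d+1}$ that is nonnegative on every generator of $C$ but strictly negative on $(c_1, \ldots, c_d, c_0)$. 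Nonnegativity on the slack generator forces $t \geq 0$, and a short dichotomy closes the argument: if $t > 0$, the rescaled point $x/t$ satisfies $\sum_i (x_i/t) a_{ij} + a_{0j} \geq 0$ for every $j$, hence lies in $P(a)$, and it violates the target inequality; if $t = 0$, then $x$ is a recession direction of $P(a)$ along which $\sum_i c_i x_i < 0$, so adding a large positive multiple of $x$ to any feasible point again contradicts validity.

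The main obstacle is not any deep machinery but the sign and orientation bookkeeping in the homogenization: one must set up the cone $C$ so that the inequality ``$\sum_j \lambda_j a_{0j} \leq c_0$'' in (i), as opposed to an equality, is captured faithfully by the extra slack generator, and one must verify cleanly that the $t=0$ branch of the case analysis really does contradict validity via the recession-ray argument. Everything else is either direct algebraic substitution or a direct invocation of the Farkas lemma cited from \cite{ziegler}.
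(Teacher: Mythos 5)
Your proof is correct, but it takes a genuinely different route from the paper. The paper does not really prove Proposition \ref{ptb:10} at all: it is presented as a review of the (affine) Farkas lemma quoted from \cite{ziegler}, and the only argument the author had in mind (left in the source, commented out) is a purely notational translation matching the statement to Proposition 1.9 of that book. You instead give an essentially self-contained derivation: the $(\Leftarrow)$ direction by nonnegative aggregation of the defining inequalities (with (ii) forcing $P(a)=\emptyset$, so validity is vacuous), and the $(\Rightarrow)$ direction by splitting on emptiness, invoking the homogeneous infeasibility form of Farkas when $P(a)=\emptyset$, and otherwise separating $(c_1,\dots,c_d,c_0)$ from the closed, finitely generated cone $C$ whose extra slack generator $(0,\dots,0,1)$ correctly encodes the inequality $\sum_j\lambda_j a_{0j}\le c_0$; your dichotomy on the last separation coordinate $t$ (a feasible point $x/t$ violating the inequality when $t>0$, a recession ray driving $\sum_i c_ix_i$ to $-\infty$ when $t=0$, where nonemptiness is genuinely used) closes the argument cleanly. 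What each approach buys: the paper's citation is the shortest possible treatment of a standard result and keeps the appendix focused on the genuinely new material (Lemma \ref{ptb:4}, Theorem \ref{ptb:5}); your argument is longer but independent of the precise formulation in the reference, makes explicit where closedness of finitely generated cones, the slack variable, and the nonemptiness hypothesis enter, and would let the proposition stand on the separating hyperplane theorem plus the basic homogeneous alternative alone.
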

\if0
\color{blue}
\begin{proof}
Put the notion in proposition 1.9(Farkas lemma) in \cite{ziegler} as 
\begin{align*}
m &=n, \\
A &= (-a_{ji})_{j\in[d], i\in[n]}\in \mathbf R^{m\times d}, &
z &= (a_{01}, \dots, a_{0n})^\top \in \mathbf R^m, \\
\mathbf a_0 &=  (-c_1, \dots, -c_d) \in \mathbf R^d, &
z_0 &= c_1\\
\mathbf c &= (\lambda_1, \dots, \lambda_n).
\end{align*}
\end{proof}
\color{black}
\fi
Note that condition (ii) in the Farkas lemma implies $P(a)=\emptyset$.

Lemma \ref{ptb:1} and Proposition \ref{ptb:10} imply the following lemma, 
which is not required by the proof of Theorem \ref{ptb:5} but is required  
by that of Theorem \ref{ptb:8}.
\begin{lemma} \label{ptb:4}
If $\mathcal H(a)$ is in general position, 
then the abstract simplicial complex associated with $P(a)$ is equivalent to 
$\mathcal F(a)$.
\end{lemma}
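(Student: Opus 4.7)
My plan to prove Lemma \ref{ptb:4} is to identify both simplicial complexes by realising the facets of $P(a)$ explicitly as the sets $F_j(a)$ for $j$ in a distinguished index set. Set $S := \{j \in [n] : F_j(a) \neq \emptyset\}$. Lemma \ref{ptb:1} supplies two immediate inputs: $P(a)$ is $d$-dimensional, and for each $j \in S$ the face $F_j(a)$ has dimension $d-1$, hence is a facet of $P(a)$. The crux of the proof is the converse: every facet of $P(a)$ equals $F_j(a)$ for some $j \in S$.

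For this converse I would argue by contradiction. Let $F$ be a facet and suppose $F \not\subset \partial H_j(a)$ for every $j \in [n]$. Then for each $j$ I can pick $x_j \in F$ satisfying $\sum_i a_{ij}\,x_{ji} + a_{0j} > 0$. The centroid $\bar x := \frac{1}{n}\sum_{j=1}^n x_j$ belongs to $F$ by convexity, and for every $k \in [n]$ the inequality $\sum_i a_{ik}\,\bar x_i + a_{0k} > 0$ holds, because the $k$-th summand in the average is strictly positive by construction while every other summand is non-negative (since each $x_j$ lies in $F \subset P(a) \subset H_k(a)$). Thus $\bar x$ lies in the interior of $P(a)$, contradicting the fact that the proper face $F$ is contained in the boundary of $P(a)$. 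Hence $F \subset \partial H_{j_0}(a)$ for some $j_0$, which forces $j_0 \in S$ and $F \subset F_{j_0}(a)$. Since $F$ and $F_{j_0}(a)$ are both $(d-1)$-dimensional faces of $P(a)$ with $F \subset F_{j_0}(a)$, they must coincide.

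With the facets of $P(a)$ in bijection with $S$, the remaining task is bookkeeping. Note that $\mathcal F(a)$ is concentrated on $S$: any $J \not\subset S$ contains some $j \notin S$, and then $F_J(a) \subset F_j(a) = \emptyset$ forces $J \notin \mathcal F(a)$. Pick a bijection $\sigma : \{1,\dots,|S|\} \to S$ labelling the facets of $P(a)$. By the paper's definition, the abstract simplicial complex associated with $P(a)$ is
\[
\mathcal F_P = \{I \subset \{1,\dots,|S|\} : F_{\sigma(I)}(a) \neq \emptyset\},
\]
and the map $I \mapsto \sigma(I)$ is an inclusion-preserving bijection $\mathcal F_P \to \mathcal F(a)$, yielding $\mathcal F_P \cong \mathcal F(a)$.

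The main obstacle is the centroid/averaging step in identifying facets. The strict-inequality bookkeeping itself is routine, but the argument depends essentially on $P(a)$ being $d$-dimensional, so that its interior is non-empty and proper faces lie on its boundary; this is exactly where Lemma \ref{ptb:1} is used in a non-trivial way. One could replace the averaging by an appeal to the Farkas lemma (Proposition \ref{ptb:10}), but the direct centroid argument seems the most economical.
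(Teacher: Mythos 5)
Your proof is correct, and the central step takes a genuinely different route from the paper's. Where you argue by averaging (building a point $\bar x \in F \cap \bigcap_k \mathrm{int}\, H_k(a) \subset F \cap \mathrm{int}\, P(a)$, contradicting that a proper face of the full-dimensional $P(a)$ lies in the boundary), the paper instead invokes the Farkas lemma (Proposition \ref{ptb:10}): given a facet $F$ cut out by a valid inequality $\sum c_i x_i + c_0 \geq 0$, Farkas produces nonnegative $\lambda_j$ with $\sum_j a_{ij}\lambda_j = c_i$, and one then argues that exactly one $\lambda_j$ is positive --- if none, then $c=0$ and $F$ is not proper; if two, then $F$ would lie in two bounding hyperplanes, forcing $\dim F \le d-2$ by Lemma \ref{ptb:1} --- which again yields $F = F_j(a)$. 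Both arguments lean on Lemma \ref{ptb:1} in the same two places ($\dim P(a)=d$, and $\dim F_J(a) = d - |J|$), and both then finish with the same combinatorial bookkeeping to identify the nerve with $\mathcal F(a)$. Your centroid argument is shorter and more self-contained (it needs only convexity of faces and the inclusion $\bigcap_j \mathrm{int}\, H_j \subset \mathrm{int}\, P$), whereas the paper's Farkas route is more algebraic and sits closer to the machinery it has already set up; either is a valid proof of the lemma.
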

\begin{proof}
Note that we assume $d\geq 1$
and $P(a)$ is a $d$-dimensional polyhedron.
Let $F$ be a facet of $P(a)$; 
then there is an inequality $\sum_{i=1}^d c_ix_i + c_0\geq 0$ valid for $P(a)$
such that 
\[
F = P(a) \cap \left\{x\in \mathbf R^d : \sum_{i=1}^d c_ix_i + c_0= 0\right\}.
\]
Since $P(a)$ is not empty, 
condition (ii) in the Farkas lemma does not hold.
By the Farkas lemma, 
there exist $\lambda_j\geq 0\, (j\in [n])$ such that 
\[
\sum_j a_{ij} \lambda_j = c_i, \quad
\sum_j a_{0j} \lambda_j \leq c_0\quad (1\leq i\leq d).
\]
Moreover, there is a unique $\lambda_j$ which is greater than $0$.
In fact, if there is not such a $j$, then we have 
$c_i=0\, (1\leq i\leq d)$. This implies $F=P(a)$ or $F=\emptyset$.
This is a contradiction.

If there are two indexes $j(1)$ and $j(2)$ such that 
$\lambda_{j(1)} >0$,  $\lambda_{j(2)} >0$, and $j(1)\neq j(2)$, 
then we have
$
F \subset P(a)
\cap \partial H_{j(1)} (a) \cap \partial H_{j(2)} (a).
$
This contradicts the fact that $F$ is $d-|J|$-dimensional.

Therefore, there is a unique $j$ such that 
$a_{ij} \lambda_j=c_i, \, a_{0j} \lambda_j\leq c_0.$
Since we have 
\[
0 \leq \lambda_j\left(\sum_{i=1}^da_{ij}x_i+a_{0j}\right)\leq 
\sum_{i=1}^dc_ix_i+c_0= 0
\]
for $x\in F$, we have $c_0= a_{0j} \lambda_j$.
Consequently, $F=F_j(a)$.

If $F_j(a)\neq \emptyset$, then, by Lemma \ref{ptb:1}, $F_j(a)$ is a facet of $P(a)$.

Therefore, all the facets of $P(a)$ are given by 
$F_j(a)\, (\{j\} \in \mathcal F(a))$.
Consequently, the abstract simplicial complex $\mathcal F(a)$ associated with 
$P(a)$ is equivalent to $\mathcal F(a)$.
\end{proof}

\begin{theorem} \label{ptb:5}
Let $P\subset \mathbf R^d$ be a polyhedron in general position, let 
$n$ be the number of facets of $P$, 
and let $\mathcal F$ be the abstract simplicial complex associated with $P$.
Then, the set 
\[
U =\left\{
a\in \mathbf R^{(d+1)\times n}  :
\text{$\mathcal H(a)$ is in general position 
 and $\hat{\mathcal F} (a) = \hat{\mathcal F} $}
\right\}
\]
is an open set of $\mathbf R^{(d+1)\times n}$.
\end{theorem}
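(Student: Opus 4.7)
The plan is to localize the two conditions defining $U$ on each subset $J\subset\{0,1,\dots,n\}$ separately. Under the general position hypothesis for $\tilde a$, asking $\hat{\mathcal F}(a)=\hat{\mathcal F}$ together with $\hat{\mathcal H}(a)$ in general position is equivalent to asking, for every $J\subset\{0,1,\dots,n\}$, that $\hat F_J(a)$ be $(d+1-|J|)$-dimensional when $J\in\hat{\mathcal F}$ and that $\hat F_J(a)=\{0\}$ when $J\notin\hat{\mathcal F}$. Since there are only $2^{n+1}$ such subsets, it suffices to prove that each of these per-$J$ conditions holds on some open neighborhood of a fixed $\tilde a\in U$ and then intersect.

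For $J\notin\hat{\mathcal F}$ I would argue by a compactness/continuity contradiction. If no neighborhood of $\tilde a$ enforces $\hat F_J(a)=\{0\}$, there is a sequence $a_n\to\tilde a$ with $\hat F_J(a_n)\neq\{0\}$, so we can choose $v_n\in\hat F_J(a_n)$ with $\|v_n\|=1$. By compactness of the unit sphere in $\mathbf R^{d+1}$, a subsequence satisfies $v_n\to v$ with $\|v\|=1$. Because $\hat F_J$ is cut out by the closed linear conditions $\langle a_j,x\rangle=0$ for $j\in J$ and $\langle a_j,x\rangle\geq 0$ for $j\notin J$, joint continuity of $(a,x)\mapsto\langle a_j,x\rangle$ forces $v\in\hat F_J(\tilde a)$, contradicting $\hat F_J(\tilde a)=\{0\}$.

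For $J\in\hat{\mathcal F}$, I would adapt the interior-point construction from the proof of Lemma~\ref{ptb:1} to obtain a nonzero $x^*\in\hat F_J(\tilde a)$ with $\langle a_k(\tilde a),x^*\rangle>0$ for every $k\in\{0,\dots,n\}\setminus J$; as established there, general position combined with $J\in\hat{\mathcal F}$ also forces the columns $\{a_j(\tilde a):j\in J\}$ to be linearly independent. Linear independence being an open condition, on a neighborhood of $\tilde a$ the kernel $K(a):=\bigcap_{j\in J}\partial\hat H_j(a)$ remains a $(d+1-|J|)$-dimensional linear subspace varying continuously with $a$. Projecting $x^*$ orthogonally onto $K(a)$ yields a continuous selection $x(a)$ with $x(\tilde a)=x^*$; for $a$ close enough the strict inequalities $\langle a_k(a),x(a)\rangle>0$ persist, so $x(a)\in\hat F_J(a)\setminus\{0\}$. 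Applying the same continuity to a small relative ball around $x(a)$ inside $K(a)$ shows that ball lies in $\hat F_J(a)$, so $\hat F_J(a)$ has non-empty relative interior in $K(a)$ and hence affine hull of full dimension $d+1-|J|$. The main obstacle is exactly this last step: persistence of non-triviality is easy, but tracking the correct dimension simultaneously is what forces the interior-point construction to be combined with continuity of the hyperplane intersection $K(a)$.
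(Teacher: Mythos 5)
Your argument is correct, and it reaches the conclusion by a mechanism that differs from the paper's in how openness is actually certified. The paper proceeds globally: it defines, for each $J\subset\{0,1,\dots,n\}$, an explicit set $V_J$ (linear independence of the columns indexed by $J$ plus a nonzero point of \eqref{ptb:2} when $J\in\hat{\mathcal F}$; the conditions $\hat F_J(a)=\{0\}$ and $a_j\neq 0$ when $J\notin\hat{\mathcal F}$), proves the set identity $U=\bigcap_J V_J$ in two separate inclusions, and then shows each $V_J$ is open --- via the coordinate change leading to system \eqref{ptb:7} and the fact that a projection of an open set is open for $J\in\hat{\mathcal F}$, and via a scaling reduction to the sphere product \eqref{ptb:9} together with a closed-map (compactness) argument for $J\notin\hat{\mathcal F}$. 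You instead prove openness pointwise at an arbitrary $\tilde a\in U$: the same per-$J$ decomposition, but with the $J\notin\hat{\mathcal F}$ case handled by a sequential compactness contradiction on the unit sphere (essentially the paper's compactness idea, localized, with no need for the column-nonvanishing normalization), and the $J\in\hat{\mathcal F}$ case handled by perturbing the interior point of \eqref{ptb:2} from Lemma \ref{ptb:1} through the continuously varying subspace $K(a)=\bigcap_{j\in J}\partial\hat H_j(a)$, the relative ball giving both $\hat F_J(a)\neq\{0\}$ and the exact dimension $d+1-|J|$ (upper bound from $\hat F_J(a)\subseteq K(a)$). Your route is more elementary and avoids the auxiliary sets $V_J$, the coordinate change, and the sphere-product argument, at the cost of not producing the explicit open sets whose intersection is $U$; the paper's route yields that slightly stronger structural description. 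Two small points you implicitly use and should state: for $J\in\hat{\mathcal F}$ one has $|J|\leq d$ (so ``$(d+1-|J|)$-dimensional'' really does imply nonzero), which follows from the general position of $P$; and the interior-point and linear-independence facts for $J$ containing $0$ require the ``analogous'' extension of Lemma \ref{ptb:1} that the paper itself invokes in its Step 2. Neither is a gap, just bookkeeping.
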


\begin{proof}
Our proof is given by writing $U$ as an intersection of 
finite open sets $V_J\, (J\subset\{0, 1, \dots, n\})$: 
\begin{equation} \label{ptb:6}
U = \left(\bigcap_{J\subset \{0, 1, \dots, n\}}  V_J\right).
\end{equation}

\medskip\noindent
{\bf STEP 1} \/.
First, we define $V_J$.
For $J=\{j(1), \dots, j(s)\} \in \hat{\mathcal F} $, 
let $V_J$ consist of $a\in \mathbf R^{(d+1)\times n} $ such that
the vectors $a_{j(1)}, \dots, a_{j(s)} $ are linearly independent
and the set \eqref{ptb:2} includes a nonzero element.
For $J\subset \{0, 1, \dots, n\}$ such that $J\notin \hat{\mathcal F} $, 
let $V_J$ be the intersection of 
\[
\left\{a\in \mathbf R^{(d+1)\times n}  : \hat F_J(a)=\{0\} \right\}
\]
and 
\begin{equation}\label{ptb:9}
\bigcap_{j=1}^n
\left\{a\in \mathbf R^{(d+1)\times n}  : a_j\neq 0\right\}.
\end{equation}

\medskip\noindent
{\bf STEP 2} \/.
Next, we show that the right-hand side of \eqref{ptb:6} includes 
the left-hand side.
Let $J=\{j(1), \dots, j(s)\} \subset \{0, 1, \dots, n\}$.

Suppose $J\in \hat{\mathcal F} $.
Let $a\in U$; then we have $\hat{\mathcal F} (a)=\hat{\mathcal F}$ 
by the definition of $U$.
When $J^c=\emptyset$, 
the set \eqref{ptb:2} is equal to $\hat F_J(a)$ and includes a nonzero element.
When $J^c\neq\emptyset$, 
by an argument analogous to that in the proof of Lemma \ref{ptb:1}, 
we can show that the set \eqref{ptb:2} includes a nonzero element.
Consequently, equation \eqref{ptb:3} holds.
Since the left-hand side of \eqref{ptb:3} is 
a $d+1-|J|$-dimensional affine subspace, 
the vectors $a_{j(1)}, \dots, a_{j(s)} $ are linearly independent. 
Hence, we have $U\subset V_J\, (J\in \mathcal F)$.

Suppose $J\notin\hat{\mathcal F}$.
For $a\in U$, it is obvious that $\hat F_J(a)=\{0\}$.
By the assumption for $\mathcal F$, 
we have $\{j\} \in \hat{\mathcal F} $ for arbitrary $j\in [n]$.
By the argument in the previous paragraph, $a\in U$ implies $a_j\neq 0$.
Hence, we have $U\subset V_J\, (J\notin\mathcal F)$.

Therefore, the right-hand side of \eqref{ptb:6} includes the left-hand side.

\medskip\noindent
{\bf STEP 3} \/.
We show that the left-hand side of \eqref{ptb:6} includes the right-hand side. 
Suppose a matrix  $a$ is included in the right-hand side of \eqref{ptb:6}.
Let $J\subset [n]$.
When $J\in \mathcal F$, 
$a \in V_J$ implies that the set \eqref{ptb:2} includes a nonzero element.
Hence, we have $\hat F_J(a) \neq \{0\} $.
When $J\notin \mathcal F$, 
$a\in V_J$ implies $\hat F_J(a) = \{0\} $.
Consequently, we have $\hat{\mathcal F}(a)=\hat{\mathcal F}$.

Next,  we show that $\mathcal H(a)$ is in general position.
It is enough to show that 
$\hat F_J(a)$ is a $d+1-|J|$-dimensional cone or that it is equal to $\{0\} $
for $J=\{j(1), \dots, j(s)\} \subset \{0, 1, \dots, n\}$.
When $J\in \hat{\mathcal F} $, 
set \eqref{ptb:2} is not empty and equation \eqref{ptb:3} holds.
Since the vectors $a_{j(1)}, \dots, a_{j(s)} $ are linearly independent, 
the right-hand side of \eqref{ptb:3} is a $d+1-|J|$-dimensional affine subspace.
When $J\not\in \hat{\mathcal F} $, we have $\hat F_J(a)=\{0\} $.
Hence, $\mathcal H(a)$ is in general position.
Therefore, equation \eqref{ptb:6} holds. 

\medskip\noindent
{\bf STEP 4} \/.
Let $J\subset \{0, 1, \dots, n\}$.
We show that $V_J$ is an open set.

Suppose $J\in\mathcal F$.
Let $a\in V_J$. 
Since the vectors $a_j (j\in J)$ are linearly independent, 
there is a subset $I\subset[n]$ such that $|I|=|J|$ and 
the $|I|\times|J|$ matrix $(a_{ij} )_{i\in I, j\in J} $ has 
the inverse matrix $(c_{ji} )_{i\in I, j\in J} $.
Let $\sigma:I\rightarrow J$ be a bijection, then 
\[
y_k =
\begin{cases}
\sum_{i\in I} a_{i\sigma(k)} x_i & (k \in I)\\
x_k & (k\in I^c)
\end{cases}
\]
defines a coordinate transformation.
By this coordinate transformation, 
the condition that set \eqref{ptb:2} includes a nonzero element
can be written as follows: 
there exists $y\in \mathbf R^d$ such
that  
\begin{equation} \label{ptb:7}
\begin{cases}
y_{\sigma^{-1} (j)} +\sum_{i\in I^c}  y_ia_{ij}  =0 
& (j\in J), \\
\sum_{i\in I} \sum_{k\in J}  y_{\sigma^{-1} (k)} c_{ki} a_{ij} +\sum_{i\in I^c}  a_{ij} y_i>0
& (j\in J^c), \\
y\neq 0 .
\end{cases}
\end{equation}

Let $V_{IJ} $ be the set consisting of $a$ such that 
the matrix $(a_{ij} )_{i\in I, j\in J} $ is invertible and 
there exists $y$ satisfying condition \eqref{ptb:7}.
Then, we have 
\[
V_J= \bigcup_{\substack{I\subset[n], \\ |I|=|J|} }  V_{IJ}.
\]
In equation \eqref{ptb:7}, 
the value of $y_j\, (j\in J)$ is determined uniquely 
by the value of the other variables.
We can regard \eqref{ptb:7} as a condition for the variables 
$a, \, y_j (j\notin J)$.
The set $V_{IJ} $ is the projection of the following open subset 
of $\mathbf R^{(d+1)\times n} \times \mathbf R^{n-|J|}$: 
the set consisting of 
$(a, y_j)_{j\notin J} \in \mathbf R^{(d+1)\times n} \times \mathbf R^{n-|J|}$
that satisfies \eqref{ptb:7} and in which 
$(a_{ij} )_{i\in I, j\in J} $ is invertible.
This implies that $V_{IJ}$ is open.
Consequently, $V_J$ is an open set.

Suppose $J\notin\mathcal F$.
There is the isomorphism 
between the set \eqref{ptb:9} and $\mathbf R_{>0}^n\times(S^d)^n$.
Here, $(S^d)^n$ denotes the $n$-th direct product of a $d$-dimensional sphere $S^d$.
Since $V_J$ is closed under the transformation 
which multiplies column vectors by positive numbers,
\[
(a_{ij} ) \mapsto (\lambda_ja_{ij} ), \quad (\lambda_j \in \mathbf R_{>0} ), 
\]
the image of $V_J$ under the isomorphism is 
$\mathbf R_{>0}^n \times \left((S^d)^n\cap V_J\right)$.
It is enough to show $(S^d)^n\cap V_J$ is an open set.
Since the projection
\[
(S^d)^n\times S^d \rightarrow (S^d)^n, \quad (a, x) \mapsto a
\]
is a continuous map from a compact set to a Hausdorff space, it is a closed map.
The set $(S^d)^n\backslash V_J$ is the projection of the following closed set 
of $(S^d)^n\times S^d$:
the intersection of $(S^d)^n\times S^d$ 
and the subset of $\mathbf R^{(d+1)\times n}\times \mathbf R^{d+1}$ 
consisting of $(a,x)$ satisfying 
\[
\begin{cases}
\sum_{i=0}^d a_{ij}x_i = 0 & (j\in J)\\
\sum_{i=0}^d a_{ij}x_i \geq 0 & (j\in J^c).
\end{cases}
\]
Hence, $(S^d)^n\cap V_J$ is open.
\end{proof}
\begin{proof}[Proof of Theorem \ref{ptb:8}]
With the notation of Theorem \ref{ptb:5}, 
it is enough to show that for $a\in U$, 
the abstract simplicial complex associated with $P(a)$ is equivalent to 
$\mathcal F$.
Let $a\in U$.
By Lemma \ref{ptb:4}, 
the abstract simplicial complex associated with $P(a)$
is equivalent to $\mathcal F(a)$.
By Corollary \ref{ptb:11},
$\hat{\mathcal F}(a)=\hat{\mathcal F}$  implies $\mathcal F(a)=\mathcal F$.
\end{proof}

\end{document}